\numberwithin{equation}{chapter}
\newcommand{\eps}{\varepsilon}
\newcommand{\bsy}[1]{\boldsymbol{#1}}
\newcommand{\mb}[1]{\mathbf{#1}}
\newcommand{\Xox}{X_0^{x\smash[t]{\mathstrut}}}
\newcommand{\Xtx}{X_t^{x\smash[t]{\mathstrut}}}
\newcommand{\Xsx}{X_s^{x\smash[t]{\mathstrut}}}
\newcommand{\Xlx}{X_\ell^{x\smash[t]{\mathstrut}}}
\newcommand{\wXtx}{\widetilde{X}_t^{x\smash[t]{\mathstrut}}}
\newtheoremstyle{slplain}
  {0.4cm}
  {0.4cm}
  {\upshape}
  {}
  {\bfseries}
  {.}
  { }
  {}
\newtheoremstyle{itplain}
    {0.4cm}
    {0.4cm}
    {\itshape}
    {}
    {\bfseries}
    {.}
    { }
    {}
\declaretheorem[style=slplain,numberwithin=chapter]{definition}
\declaretheorem[style=slplain,sibling=definition]{example}
\declaretheorem[style=slplain,sibling=definition]{remark}
\declaretheorem[style=itplain,sibling=definition]{theorem}
\declaretheorem[style=itplain,sibling=definition]{proposition}
\declaretheorem[style=itplain,sibling=definition]{lemma}
\let\OLDthebibliography\thebibliography
\renewcommand\thebibliography[1]{
  \OLDthebibliography{#1}
  \setlength{\parskip}{0pt}
  \setlength{\itemsep}{3pt plus 0.3ex}
}
\definecolor{shadecolor}{gray}{0.96}
\title{\bfseries The spectral expansion approach to index transforms and connections with the theory of diffusion processes\\[0.3cm]}
\author{Rúben Sousa
\thanks{Corresponding author. CMUP, Departamento de Matemática, Faculdade de Ciências, Universidade do Porto, Rua do Campo Alegre 687, 4169-007 Porto, Portugal. Email: \texttt{ruben.sousa@outlook.com}}
\and
Semyon Yakubovich \thanks{Departamento de Matemática, Faculdade de Ciências, Universidade do Porto, Rua do Campo Alegre 687, 4169-007 Porto, Portugal. Email: \texttt{syakubov@fc.up.pt}}
\\[0.3cm]
}
\date{June 16, 2017\\[0.7cm]}
\begin{document}

\maketitle

\begingroup
\let\clearforchapter\relax
\chapter{Introduction}
\endgroup

Index transforms are integral transforms whose kernel depends on the parameters (or indices) of well-known special functions \cite{yakubovich1996}. Despite being less well known than the classical Fourier, Laplace and Mellin transforms, various index transforms, such as the Kontorovich-Lebedev and the Mehler-Fock transform, have found important applications to problems arising in physics (e.g.\ \cite{gasaneo2001, jungpedersen2012}) and, more recently, in mathematical finance \cite{craddock2015, linetsky2006}.

It is well-known that many different integral transforms can be deduced by carrying out a spectral expansion with respect to the eigenfunctions of the corresponding second-order Sturm-Liouville differential operators \cite{dunfordschwartz1963, srivastava2000, titchmarsh1962}. In particular, the Kontorovich-Lebedev and the (ordinary) Mehler-Fock index transforms have been derived by applying this technique to differential operators related with the modified Bessel equation and the Legendre equation, respectively (see \cite{titchmarsh1962} and \cite{neretin2001} respectively). On the other hand, such spectral representation techniques have also been successfully applied to a large family of parabolic partial differential equation (PDE) problems and, as a by-product, to the characterization of the (Markovian diffusion) stochastic processes whose infinitesimal generator is a second-order Sturm-Liouville operator (see \cite{linetsky2006} and the references therein).

However, the connection between the index (and other integral) transforms, the parabolic PDEs and the associated diffusion processes is still rather unexplored in the literature. The heat kernels of the PDEs associated with some index transforms have been recently investigated in \cite{yakubovich2011} and \cite{rodrigues2013}, but in a non-unified fashion, and without reference to the related diffusion processes. In parallel, a general discussion of this connection is also lacking in the literature on spectral methods applied to problems modeled by diffusion processes. (It was pointed out in \cite{linetsky2004} that the index Whittaker transform is related to the basic diffusion process for the modeling of Asian options in mathematical finance, but this is just a particular case of the general connection.)

The aim of this paper is to demonstrate that the spectral expansion technique is an effective tool for studying integral transforms of the index type, not only because it yields a systematic procedure for deriving the index transforms, but also, and more importantly, because it gives rise (under a very general framework) to an explicit integral representation for the fundamental solution of the associated parabolic PDE, which is also the transition probability density of a Markovian diffusion process. Moreover, we will show that the spectral approach allows us to provide a natural generalization of the so-called Yor integral, which was introduced in \cite{yakubovich2013} under the particular context of the Kontorovich-Lebedev transform. As we shall see, the Yor integral can be extended to the whole family of integral transforms arising from Sturm-Liouville operators, and from this extension a general relation between the (inverse) transforms and the associated PDEs is obtained.

Valuation problems in mathematical finance constitute the main motivation behind the growing literature on spectral-theoretic approaches to the study of diffusion processes. Our study exhibits the general nature of the applications of index transforms to mathematical finance, and provides a framework for further investigations.

This paper is structured as follows. In Section \ref{chap:preliminaries} we set up some notation and review the relevant background material on index transforms and special functions. Section \ref{chap:spectral} starts by presenting the general spectral theory of Sturm-Liouville differential operators, the resulting generalized Fourier transforms and the connection with the associated parabolic PDE. Then, in Subsection \ref{sec:index_examples}, we treat in detail the Sturm-Liouville operators which yield three important index transforms, namely the Kontorovich-Lebedev, index Whittaker and Mehler-Fock transforms. In Subsection \ref{sec:yorintegrals} we introduce the generalized Yor integral as the inverse generalized Fourier transform of an exponential function and we study some of its properties, again focusing on the Yor integrals which result from index transforms. Finally, Section \ref{chap:diffusion} is devoted to the link with diffusion processes: we describe the construction of the diffusion process which generates a given Sturm-Liouville operator, and we show that the famous Feynman-Kac theorem can be used to derive some interesting properties of index transforms and the corresponding Yor integrals.

\chapter{Preliminaries} \label{chap:preliminaries}

Throughout this work we shall denote by $L_2\bigl(\Omega; w(x) dx\bigr)$ the weighted $L_2$-space with norm
\[
\|f\|_{L_2(\Omega; w(x) dx)} = \biggl(\int_{\Omega} |f(x)|^2\, w(x) dx \biggr)^{1/2}.
\]

We say that $\rho = \{\rho_{ij}\}_{i,j=1,2}$ is a \textit{positive $2 \times 2$ matrix measure} (\cite{dunfordschwartz1963}, XIII.5.6) if each $\rho_{ij}$ is a complex-valued set function defined on the Borel subsets of $\mathbb{R}$, and the following conditions hold:\vspace{-2pt}
\begin{enumerate}\setlength{\itemsep}{-1pt}
\item[(i)] The matrix $\rho(B)$ is Hermitian and positive semi-definite for every bounded Borel set $B$;
\item[(ii)] $\rho_{ij}(\bigcup_{k=1}^\infty B_k) = \sum_{k=1}^\infty \rho_{ij}(B_k)$ for any disjoint Borel sets $B_1, B_2, \ldots$ with bounded union.
\end{enumerate}
If, in addition, we have $\rho(I) = 0$ for any interval $I \subseteq (-\infty, 0)$, we shall say that $\rho$ is a \textit{positive matrix measure on the nonnegative real axis}. For such a measure, we denote by $L_2\bigl([0,\infty); \rho\bigr)$ the Hilbert space obtained by completion of the space of all bounded, piecewise continuous, compactly supported functions $\psi(\lambda) = (\psi_1(\lambda), \psi_2(\lambda))$ with respect to the inner product
\[
(\psi, \eta) = \int_{0-}^\infty \sum_{j,k=1}^2 \psi_j(\lambda)\, \overline{\eta_k(\lambda)}\, d\rho_{jk}(\lambda).
\]

The \textit{Kontorovich-Lebedev transform} is defined by \cite{kontorovich1938, sneddon1972, yakubovich1996}
\begin{equation} \label{eq:prel_kl_transf}
K[f](\tau) = \int_0^\infty K_{i\tau}(y) f(y)\, {dy \over y}
\end{equation}
where the integral converges with respect to the norm in $L_2\bigl((0,\infty); \tau \sinh(\pi\tau) d\tau\bigr)$. Here $K_{i\tau}(y)$ is the modified Bessel function of the second kind with purely imaginary index $i\tau$. This integral transform is an isometric isomorphism \cite{yakubovich2004}
\[
K[\cdot]: L_2\bigl((0,\infty); \tfrac{dy}{y}\bigr) \, \to \, L_2\bigl((0,\infty); \tau \sinh(\pi\tau) d\tau\bigr)
\]
which yields the Parseval identity
\[
{2 \over \pi^2} \int_0^\infty \tau \sinh(\pi\tau) |K[f](\tau)|^2 d\tau = \int_0^\infty |f(y)|^2 {dy \over y}
\]
and whose inverse operator is defined by the formula
\begin{equation} \label{eq:prel_kl_inverse}
f(y) = {2 \over \pi^2} \int_0^\infty \tau \sinh(\pi\tau) K_{i\tau}(y) K[f](\tau) \, d\tau
\end{equation}
where the integral converges with respect to the norm in $L_2\bigl((0,\infty); {dy \over y}\bigr)$.

The so-called \textit{index Whittaker transform} is the integral transform \cite{srivastava1998, wimp1964}
\begin{equation} \label{eq:prel_iw_transf}
W_\alpha[f](\tau) = \int_0^\infty W_{\alpha, i\tau}(y) f(y) {dy \over y^2}
\end{equation}
where $\alpha < {1 \over 2}$ is a real parameter, and the integral converges with respect to the norm of the space $L_2\bigl((0,\infty);\, \tau \sinh(2\pi\tau)\, |\Gamma({1 \over 2} - \alpha +i\tau)|^2 d\tau\bigr)$. (Here $\Gamma(\cdot)$ is the Gamma function, cf.\ \cite{lebedev1965}.) The function $W_{\alpha, i\tau}(x)$ is the Whittaker function with indices $\alpha < {1 \over 2}$ and $i\tau \in [0,i\infty)$. The index Whittaker transform is an isometric isomorphism
\[
W_\alpha[\cdot]: L_2\bigl((0,\infty); \tfrac{dy}{y^2}\bigr) \, \to \, L_2\bigl((0,\infty);\, \tau \sinh(2\pi\tau)\, \bigl|\Gamma\bigl(\tfrac{1}{2} - \alpha +i\tau\bigr)\bigr|^2 d\tau\bigr)
\]
The corresponding Parseval identity is
\[
{1 \over \pi^2} \int_0^\infty \tau \sinh(2\pi\tau) \bigl|\Gamma\bigl(\tfrac{1}{2} - \alpha +i\tau\bigr)\bigr|^2 |W_\alpha[f](\tau)|^2 d\tau = \int_0^\infty |f(y)|^2 {dy \over y^2}
\]
and the inverse operator is given by
\begin{equation} \label{eq:prel_iw_inverse}
f(y) = {1 \over \pi^2} \int_0^\infty \tau \sinh(2\pi\tau) \bigl|\Gamma\bigl(\tfrac{1}{2} - \alpha +i\tau\bigr)\bigr|^2 W_{\alpha,i\tau}(y) W_\alpha[f](\tau) \, d\tau
\end{equation}
where the integral converges with respect to the norm in $L_2\bigl((0,\infty); {dy \over y^2}\bigr)$. This integral transform is a direct generalization of the Kontorovich-Lebedev transform, which is obtained by letting $\alpha = 0$ (and doing some simple manipulations).

The \textit{Mehler-Fock transform} is, by definition, given by \cite{nasim1984, yakubovich1996, yakubovichgraaf1999}
\begin{equation} \label{eq:prel_mf_transf}
P_{\mu}[f](\tau) = \int_1^\infty P_{-{1\over 2} + i \tau}^{-\mu}(y) f(y) \, dy
\end{equation}
where $P_{-{1\over 2} + i \tau}^{-\mu}(x)$ is the associated Legendre function of the first kind and, in the general case, $\mu$ is any  complex parameter. The ordinary Mehler-Fock transform corresponds to the case $\mu = 0$; for other values of $\mu$, this integral transform is known as the generalized Mehler-Fock transform. The inverse operator is
\begin{equation} \label{eq:prel_mf_inverse}
f(y) = {1 \over \pi} \int_0^\infty \tau \sinh(\pi\tau) |\Gamma(\tfrac{1}{2} + \mu + i\tau)|^2 P_{-{1 \over 2} + i \tau}^{-\mu}(x) P_{\mu}[f](\tau)\, d\tau
\end{equation}
and (under suitable conditions on $f$ and $\mu$) the Parseval equality
\[
\int_1^\infty |f(x)|^2 dx = {1 \over \pi} \int_0^\infty \tau \sinh(\pi\tau) \bigl|\Gamma(\tfrac{1}{2} + \mu + i \tau)\bigr|^2 |P_{\mu}[f](\tau)|^{2\,} d\tau
\]
holds.

Let us recall some basic facts concerning the special functions involved in the integrals above, which will be useful in the later sections. The modified Bessel functions $I_\nu(x)$ and $K_\nu(x)$ ($x\in(0,\infty), \nu \in \mathbb{C}$) are solutions of the modified Bessel equation $x^2{\partial^2 u \over \partial x^2} + x {\partial u \over \partial x} - (x^2 + \nu^2) u = 0$, and for fixed $x$ they are entire functions of the index $\nu$. The modified Bessel function of the second kind, $K_\nu(x)$, is even with respect to the index, i.e., $K_{-\nu}(x) = K_{\nu}(x)$. For fixed $\nu$ such that $\mathrm{Re}\, \nu > 0$, the asymptotic behavior near zero and infinity is (\cite{dlmf}, \S10.25 and \S10.30)
\begin{align}
\label{eq:besselIK_asymp_zero} I_\nu(x) \sim {x^\nu \over 2^\nu \Gamma(\nu+1)}, \qquad K_\nu(x) \sim {\Gamma(\nu)\, x^{-\nu} \over 2^{\nu+1}}, \qquad x \to 0, \\
\label{eq:besselIK_asymp_infty} I_\nu(x) \sim {e^x \over \sqrt{2\pi x}}, \qquad K_\nu(x) \sim \sqrt{\pi \over 2x}\, e^{-x}, \qquad x \to \infty.
\end{align}
Their Wronskian is (\cite{dlmf}, \S10.28)
\begin{equation} \label{eq:besselIK_wronsk}
\mathcal{W}\{K_\nu(x), I_\nu(x)\} = {1 \over x}
\end{equation}
and the following identity holds (\cite{dlmf}, Eq.\ 10.27.2):
\begin{equation} \label{eq:besselIK_conn}
I_{\nu}(x) = I_{-\nu}(x) - {2 \over \pi} \sin(\nu \pi) K_\nu(x).
\end{equation}

The Whittaker (confluent hypergeometric) functions $M_{\alpha, \eta}(x)$ and $W_{\alpha, \eta}(x)$ ($x \in (0, \infty)$, $\alpha \in \mathbb{C}$,\linebreak $\eta \in \mathbb{C} \setminus \{-{1 \over 2}, -1, -{3 \over 2}, \ldots\}$) are a pair of solutions of Whittaker's equation ${\partial^2 u \over \partial x^2} + (-{1 \over 4} + {\alpha \over x} + {1/4 - \eta^2 \over x^2})u = 0$. The Whittaker functions are, for fixed $x$, analytic functions of $\alpha \in \mathbb{C}$ and $\eta \in \mathbb{C} \setminus \{-{1 \over 2}, -1, -{3 \over 2}, \ldots\}$. The Whittaker $W$ function is even with respect to the second index, $W_{\alpha, \eta}(x) = W_{\alpha, -\eta}(x)$, and it reduces to the modified Bessel function of the second kind when the first index equals zero (\cite{dlmf}, Eq.\ 13.18.9):
\begin{equation} \label{eq:whittaker_besselred}
W_{0,\nu}(2x) = \sqrt{2x \over \pi} K_\nu(x).
\end{equation}
The asymptotic behavior as $x \to 0$ and as $x \to \infty$ is (\cite{dlmf}, \S13.14)
\begin{equation} \label{eq:whittaker_asymp_zero}
\begin{gathered}
M_{\alpha, \eta} (x) = x^{\eta + {1 \over 2}} \bigl(1 + O(x)\bigr)\\[2pt]
W_{\alpha, \eta}(x) = {\Gamma(2\eta) \over \Gamma({1 \over 2} + \eta - \alpha)} x^{{1 \over 2} - \eta} + {\Gamma(-2\eta) \over \Gamma({1 \over 2} - \eta - \alpha)} x^{{1 \over 2} + \eta} + O\bigl(x^{{3 \over 2} - \mathrm{Re}\, \eta}\bigr)
\end{gathered} \qquad\;\; x \to 0 
\end{equation}
(the latter expression is valid for $0 \leq \mathrm{Re}\,\eta < {1 \over 2}$, $\eta \neq 0$) and
\begin{equation} \label{eq:whittaker_asymp_infty}
\begin{gathered}
M_{\alpha, \eta} (x) \sim {\Gamma(1+2\eta) \over \Gamma ({1 \over 2}+\eta-\alpha)}\, x^{-\alpha} e^{{1 \over 2}x}, \qquad W_{\alpha, \eta}(x) \sim x^{\alpha} e^{-{1 \over 2}x},
\end{gathered} \qquad\;\; x \to \infty.
\end{equation}
The Wronksian of the Whittaker functions is (\cite{dlmf}, \S13.14)
\begin{equation} \label{eq:whittaker_wronsk}
\mathcal{W}\{M_{\alpha,-\eta}(x), W_{\alpha,\eta}(x)\} = - {\Gamma(1-2\eta) \over \Gamma({1 \over 2} - \eta - \alpha)}
\end{equation}
and for $2\eta \in \mathbb{C} \setminus \mathbb{Z}$ the Whittaker functions are related as follows (\cite{dlmf}, Eq.\ 13.14.33):
\begin{equation} \label{eq:whittaker_conn}
W_{\alpha,\eta}(x) = {\Gamma(-2\eta) \over \Gamma({1 \over 2} - \eta - \alpha)} M_{\alpha, \eta}(x) + {\Gamma(2\eta) \over \Gamma({1 \over 2} + \eta - \alpha)} M_{\alpha, -\eta}(x).
\end{equation}

The associated Legendre function of the first kind $P_\nu^{-\mu}(x)$ and Olver's associated Legendre function $\bsy{Q}_\nu^\mu(x)$ ($x \in (1,\infty)$, $\nu, \mu \in \mathbb{C}$) constitute a standard pair of solutions of the associated Legendre equation $(x^2-1){\partial^2 u \over \partial x^2} + 2x {\partial u \over \partial x} - \bigl({\mu^2 \over x^2 - 1} + \nu(\nu+1)\bigr)u = 0$. For fixed $x$, the functions $P_\nu^\mu(x)$ and $\bsy{Q}_\nu^\mu(x)$ are entire functions of each parameter $\nu$ and $\mu$. The associated Legendre function of the first kind has the evenness property (\cite{dlmf}, Eq.\ 14.9.11)
\begin{equation} \label{eq:legendreP_even}
P_{{1 \over 2} + \nu}^{-\mu}(x) = P_{{1 \over 2} - \nu}^{-\mu}(x)
\end{equation}
and its derivative satisfies (\cite{dlmf}, Eq.\ 14.10.4)
\begin{equation} \label{eq:legendreP_deriv}
(x^2 - 1){d P_\nu^{-\mu}(x) \over dx} = -(\nu + \mu + 1) P_{\nu+1}^{-\mu}(x) - (\nu + 1) x P_\nu^{-\mu}(x)
\end{equation}
Their asymptotic behavior near the boundary $x=1$ is (\cite{dlmf}, \S14.8(ii))
\begin{equation} \label{eq:legendre_asymp_one}
\begin{aligned}
P_\nu^{-\mu}(x) & \sim {2^{-{\mu \over 2}} \over \Gamma(1 + \mu)} (x-1)^{\mu \over 2} & (-\mu \notin \mathbb{N}) \\
\bsy{Q}_\nu^0(x) & = -{\log(x-1) \over 2 \Gamma(\nu + 1)} + {{1 \over 2} \log 2 - \gamma - \psi(\nu + 1) \over \Gamma(\nu + 1)} + O(x-1) \hspace{-.1\linewidth} & (-\nu \notin \mathbb{N}) \\
\bsy{Q}_\nu^{\mu}(x) & \sim {2^{{\mu \over 2}-1\,} \Gamma(\mu) \over \Gamma(\nu + \mu + 1)} (x-1)^{-{\mu \over 2}} & (\mathrm{Re}\,\mu > 0,\, -\nu - \mu \notin \mathbb{N})
\end{aligned} \qquad\;\; x \to 1
\end{equation}
(here $\psi(\cdot)$ denotes the logarithmic derivative of the Gamma function and $\gamma = -\psi(1)$, cf.\ \cite{lebedev1965}) while near infinity we have (\cite{dlmf}, \S14.8(iii))
\begin{equation} \label{eq:legendre_asymp_infty}
\begin{aligned}
P_\nu^{-\mu}(x) & \sim {\Gamma(\nu + {1 \over 2}) \over \pi^{1 \over 2} \Gamma(\nu + \mu + 1)} (2x)^\nu \quad & (\mathrm{Re}\, \nu > -\tfrac{1}{2},\, -\nu - \mu \notin \mathbb{N})\\
\bsy{Q}_\nu^\mu(x) & \sim {\pi^{1 \over 2} \over \Gamma(\nu + {3 \over 2})} (2x)^{-\nu-1} & (-\nu - \tfrac{1}{2} \notin \mathbb{N})
\end{aligned} \qquad \;\; x \to \infty.
\end{equation}
The Wronskian is (\cite{dlmf}, Eq.\ 14.2.8)
\begin{equation} \label{eq:legendre_wronsk}
\mathcal{W}\{P_\nu^{-\mu}(x), \bsy{Q}_\nu^\mu(x)\} = -{1 \over \Gamma(\nu + \mu + 1) (x^2 - 1)}
\end{equation}
and the following connection formula holds (\cite{dlmf}, Eq.\ 14.9.12):
\begin{equation} \label{eq:legendre_conn}
\cos(\pi\nu) P_\nu^{-\mu}(x) = -{\bsy{Q}_\nu^\mu(x) \over \Gamma(\mu-\nu)} + {\bsy{Q}_{-\nu-1}^\mu(x) \over \Gamma(\nu + \mu + 1)}.
\end{equation}
Finally, we note that in the special case $\mu = {1 \over 2}$, the associated Legendre function of the first kind reduces to an elementary function (\cite{dlmf}, Eq.\ 14.5.16):
\begin{equation} \label{eq:legendreP_reduction}
P_{\nu}^{-1/2}(\cosh \xi) = \biggl({2 \over \pi \sinh\xi}\biggr)^{\!\!1/2\,} {\sinh\bigl(({1 \over 2} + \nu)\xi\bigr) \over {1 \over 2} + \nu}.
\end{equation}

\chapter{Spectral theory of Sturm-Liouville differential operators} \label{chap:spectral}

Historically, many different approaches have been proposed for the investigation of the properties of index transforms (for an overview, see \cite{koornwinder1984}, pp.\ 10-11). In this section we will see that the spectral analysis of Sturm-Liouville differential operators is a useful approach to the study of index transforms, as it provides us with a general method for deriving some of the fundamental properties of these transforms and the associated heat kernels. We begin by summarizing some general results from the spectral theory of Sturm-Liouville differential operators; we will then turn our attention to some specific operators which give rise to index-type transforms, and we finish this section with the introduction and investigation of the generalized Yor integral.

\section{General results} \label{sec:spectral_genresults}

Let $\mathcal{L}$ be a Sturm-Liouville second-order linear differential operator of the form
\begin{equation} \label{eq:sturmliouv_L}
\mathcal{L} = {1 \over r(x)} \biggl[ -{d \over dx}\biggl(p(x) {d \over dx}\biggr) + q(x) \biggr]
\end{equation}
where we will assume that $x \in (a,b)$ with $-\infty \leq a < b \leq +\infty$, $p(x)$ and $r(x)$ are real-valued two times continuously differentiable functions on $(a,b)$, $q(x)$ is a real-valued locally H\"{o}lder continuous function on $(a,b)$, and
\[
p(x),\, r(x) > 0, \qquad q(x) \geq 0 \qquad\qquad (a < x < b).
\]
(These assumptions are made for simplicity, but the method is valid in greater generality; in particular, differential operators of any order $n \in \mathbb{N}$ may be considered. See for instance \cite{dunfordschwartz1963}, Section XIII.5.)

Let $\mathrm{AC}_{\mathrm{loc}}(a,b)$ be the space of real-valued functions on $(a,b)$ which can be written as the integral of a locally integrable function. Then $\mathcal{L}$ is a linear operator on $L_2\bigl((a,b); r(x)dx\bigr)$ with domain
\[
\mathcal{D}(\mathcal{L}) = \bigl\{u \in L_2\bigl((a,b); r(x)dx\bigr): u, u' \in \mathrm{AC}_{\mathrm{loc}}(a,b) \text{ and } \mathcal{L}u \in L_2\bigl((a,b); r(x)dx\bigr) \bigr\}.
\]
It is known (\cite{mckean1956}, \S3) that either the operator $\mathcal{L}$ with domain $\mathcal{D}(\mathcal{L})$ is self-adjoint, or else it becomes a self-adjoint operator if we restrict the domain $\mathcal{D}(\mathcal{L})$ by imposing a suitable boundary condition at $a$, at $b$ or at both $a$ and $b$. In order to determine if boundary conditions must be imposed and deduce the form of the appropriate boundary conditions, one can employ Feller's boundary classification, which we may summarize as follows (cf.\ \cite{mckean1956}, \S3, and \cite{linetsky2006}, pp.\ 237-242): define, for $a < x < y < b$,
\[
\mathcal{S}[x,y] = \int_x^y {1 \over p(t)}\, dt
\]
and consider the integrals (where $c \in (a,b)$ is fixed)
\begin{align*}
& I_a = \int_a^c \biggl(\lim_{x \searrow a}\mathcal{S}[x,y]\biggr) \bigl(1+q(y)\bigr)\, r(y)dy, \; && \; I_b = \int_c^b \biggl(\lim_{y \nearrow b}\mathcal{S}[x,y]\biggr) \bigl(1+q(x)\bigr)\, r(x)dx, \\
& J_a = \int_a^c \mathcal{S}[x,c]\, \bigl(1+q(x)\bigr)\, r(x)dx, \; && \; J_b = \int_c^b \mathcal{S}[c,y]\, \bigl(1+q(y)\bigr)\, r(y)dy.
\end{align*}
The boundary $e \in \{a,b\}$ is said to be: \textit{regular} if $I_e < \infty$ and $J_e < \infty$; \textit{exit} if $I_e < \infty$ and $J_e = \infty$; \textit{entrance} if $I_e = \infty$ and $J_e < \infty$; or \textit{natural} if $I_e = \infty$ and $J_e = \infty$. Then the boundary conditions at $e$ that should be imposed to the functions $u \in \mathcal{D}(\mathcal{L})$ for the operator to become self-adjoint are:
\begin{equation} \label{eq:feller_boundarycond}
\begin{aligned}
(1-\alpha_e) \Bigl(\lim_{x \to e} u(x)\Bigr) + \alpha_e \Bigl(\lim_{x \to e} p(x) u'(x)\Bigr) = 0, \qquad & e \text{ is regular}; \\
\lim_{x \to e} u(x) = 0, \qquad & e \text{ is exit}; \\
\lim_{x \to e} p(x) u'(x) = 0, \qquad & e \text{ is entrance, or } e \text{ is natural and } \biggl|\int_e^c r(x)dx\biggr| < \infty; \\
\text{no boundary condition at } e, \qquad & e \text{ is natural and } \biggl|\int_e^c r(x)dx\biggr| = \infty.
\end{aligned}
\end{equation}
(In the regular case, $\alpha_e \in [0,1]$ can be chosen arbitrarily.)

From now on, let us assume that the boundary conditions \eqref{eq:feller_boundarycond} have been imposed to $\mathcal{D}(\mathcal{L})$, so that the resulting linear operator is self-adjoint. In these conditions, the spectrum $\sigma(\mathcal{L})$ of the self-adjoint operator $\mathcal{L}$ is contained on the interval $[0,\infty)$ (\cite{mckean1956}, \S3). The next theorem shows that the solutions to the second-order differential equation $\mathcal{L} u = \lambda u$ define an integral transform (which is sometimes called the \textit{generalized Fourier transform}) whose inverse is also of the integral transform type.

\begin{theorem} \label{thm:genfourier}
\emph{(\cite{dunfordschwartz1963}, XIII.5.13-14; \cite{weidmann1987}, Theorem 8.7)}
Let $w_1(x,\lambda), w_2(x,\lambda)$ be two continuous functions on $(a,b) \times \mathbb{R}$ such that for any fixed $\lambda \in \mathbb{R}$, $\{w_1(\cdot, \lambda), w_2(\cdot, \lambda)\}$ forms a basis for the space of solutions of $\mathcal{L} u = \lambda u$. Then there exists a positive $2 \times 2$ matrix measure $\rho$ on the nonnegative real axis such that the operator $\mathcal{F}$ given by
\begin{equation} \label{eq:genfourier_tr}
(\mathcal{F} f)_i (\lambda) = \int_a^b f(x)\, \overline{w_i(x,\lambda)}\, r(x)dx
\end{equation}
is well-defined for each $f \in L_2\bigl((a,b); r(x)dx\bigr)$ and defines an isometric isomorphism of $L_2\bigl((a,b); r(x)dx\bigr)$ onto $L_2\bigl([0,\infty); \rho\bigr)$. The inverse is given by
\begin{equation} \label{eq:genfourier_inv}
\bigl(\mathcal{F}^{-1} \widetilde{f}\bigr)(x) = \int_{0-}^\infty \sum_{i,j=1}^2 \widetilde{f}_i(\lambda) w_j(x,\lambda)\, d\rho_{ij}(\lambda), \qquad \widetilde{f} = \bigl(\widetilde{f}_1, \widetilde{f}_2\bigr) \in L_2\bigl([0,\infty); \rho\bigr).
\end{equation}
The convergence of the integrals \eqref{eq:genfourier_tr} and  \eqref{eq:genfourier_inv} is understood with respect to the norm of the spaces $L_2\bigl((a,b); r(x)dx\bigr)$ and $L_2\bigl([0,\infty); \rho\bigr)$ respectively.
\end{theorem}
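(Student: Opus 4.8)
The plan is to realize $\mathcal{F}$ through the spectral decomposition of the self-adjoint operator $\mathcal{L}$. By the spectral theorem there is a projection-valued measure $\{E_\lambda\}$ supported on $\sigma(\mathcal{L})\subseteq[0,\infty)$ with $\mathcal{L}=\int\lambda\,dE_\lambda$ and $\|f\|^2=\int d\|E_\lambda f\|^2$. The entire content is to write these projections concretely in terms of the two solutions $w_1,w_2$ of $\mathcal{L}u=\lambda u$; the matrix measure $\rho$ is precisely the object recording how the scalar spectral measure of $f$ is distributed over the two-dimensional solution space at each point $\lambda$ of the spectrum.

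First I would construct the resolvent $R_z=(\mathcal{L}-z)^{-1}$, $z\in\mathbb{C}\setminus[0,\infty)$, as an integral operator with the Green kernel $G(x,y;z)=W^{-1}\psi_a(x\wedge y,z)\,\psi_b(x\vee y,z)$, where $\psi_a(\cdot,z)$ and $\psi_b(\cdot,z)$ are the Weyl solutions of $\mathcal{L}u=zu$ that are square-integrable near $a$ and near $b$ respectively and satisfy the boundary conditions \eqref{eq:feller_boundarycond} (their existence at a singular endpoint comes from Weyl's limit-point/limit-circle theory, which dovetails with Feller's classification), and $W$ is their constant Wronskian. Expressing $\psi_a,\psi_b$ in the prescribed basis $\{w_1,w_2\}$ introduces a $2\times2$ Weyl--Titchmarsh $M$-matrix $M(z)=(m_{ij}(z))$ and yields $G(x,y;z)=\sum_{i,j}m_{ij}(z)\,w_i(x,z)\,w_j(y,z)$ plus an explicit entire-in-$z$ remainder. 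Each $m_{ij}$ is a (matrix) Herglotz function of $z$, so it admits a Nevanlinna representation $m_{ij}(z)=c_{ij}+\int\bigl(\frac{1}{\lambda-z}-\frac{\lambda}{1+\lambda^2}\bigr)\,d\rho_{ij}(\lambda)$; the Hermitian positive-semidefiniteness of the imaginary part of a matrix Herglotz function gives exactly conditions (i)-(ii) in the definition of a positive matrix measure, and $\sigma(\mathcal{L})\subseteq[0,\infty)$ forces $\rho=(\rho_{ij})$ to be supported on the nonnegative axis.

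Then Stone's formula recovers the spectral projections from the jump of the resolvent across the spectrum,
\[
\langle(E_{\lambda_2}-E_{\lambda_1})f,h\rangle=\lim_{\eps\downarrow0}\frac{1}{2\pi i}\int_{\lambda_1}^{\lambda_2}\langle(R_{\mu+i\eps}-R_{\mu-i\eps})f,h\rangle\,d\mu.
\]
Substituting the representation of $G$ and passing the $\eps$-limit onto the functions $m_{ij}$ (where it produces the measures $d\rho_{ij}$) yields $\langle E_\lambda f,f\rangle=\int_{0-}^\lambda\sum_{i,j}(\mathcal{F}f)_i\,\overline{(\mathcal{F}f)_j}\,d\rho_{ij}$, with $(\mathcal{F}f)_i$ exactly as in \eqref{eq:genfourier_tr}. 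Letting $\lambda\to\infty$ gives the Parseval identity, so $\mathcal{F}$ is a well-defined isometry into $L_2\bigl([0,\infty);\rho\bigr)$; surjectivity and the inversion formula \eqref{eq:genfourier_inv} then follow by applying the spectral theorem to $g(\mathcal{L})$ for bounded Borel $g$ and a density argument. Throughout I would first establish the identities for $f$ smooth and compactly supported in $(a,b)$---so that all integrals converge absolutely and the interchanges of limit and integration are legitimate---and only afterwards extend to general $f\in L_2\bigl((a,b);r(x)dx\bigr)$ by continuity.

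The hard part will be the analytic control of this limiting procedure: justifying $\eps\downarrow0$ inside Stone's formula and identifying the boundary values of the $m_{ij}$ with the matrix spectral measure. This rests on the Herglotz representation for matrix-valued Nevanlinna functions, together with a careful check that the resulting $\rho$ is genuinely a positive matrix measure in the sense of conditions (i)-(ii) and is independent of $z$ and of the normalization of the Weyl solutions. The singular-endpoint case carries most of the difficulty, since there the Weyl solutions and the $M$-matrix exist only through Weyl's nested-discs limiting construction; this is exactly the content of the cited theorems, so I would invoke their limit-circle/limit-point analysis rather than reprove it.
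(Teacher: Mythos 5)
Your outline is essentially correct, but note that the paper does not prove this theorem at all: it is imported verbatim from the cited sources (Dunford--Schwartz XIII.5.13--14; Weidmann, Theorem 8.7), so there is no in-paper argument to compare against. What you have written is a faithful reconstruction of the standard Weyl--Titchmarsh--Kodaira proof that those references contain --- Green kernel from the two Weyl solutions, $2\times 2$ $M$-matrix, Herglotz/Nevanlinna representation, Stone's formula, Parseval, then surjectivity by density --- and it is also exactly the mechanism the paper itself packages separately as Theorem \ref{thm:rhomeasure_tk_formula} and formula \eqref{eq:rhomeasure_formula}. One point you should patch if you were to write this out in full: Theorem \ref{thm:genfourier} only assumes $w_1,w_2$ continuous on $(a,b)\times\mathbb{R}$, whereas your construction of the $M$-matrix requires a basis of solutions defined and analytic for complex $z$ off the spectrum. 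The standard fix (and the one implicit in the paper's remark after Theorem \ref{thm:rhomeasure_tk_formula}) is to first run your entire argument for the canonical basis fixed by the initial conditions \eqref{eq:basissol_initialcond}, which is entire in $\lambda$, obtaining a matrix measure $\rho^0$; then, for a general continuous basis, write the canonical basis as $u_i(\cdot,\lambda)=\sum_k c_{ik}(\lambda)w_k(\cdot,\lambda)$ with $C(\lambda)=(c_{ik}(\lambda))$ continuous and invertible for each real $\lambda$, and define $d\rho(\lambda)=C(\lambda)^{*}\,d\rho^0(\lambda)\,C(\lambda)$; positivity and countable additivity of $\rho$ are inherited from $\rho^0$, and the isometry and inversion formulas \eqref{eq:genfourier_tr}--\eqref{eq:genfourier_inv} transform covariantly. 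With that supplement, your sketch matches the cited proofs; the genuinely hard analytic content (limit-point/limit-circle dichotomy at singular endpoints, boundary values of matrix Herglotz functions) is, as you say, exactly what the references establish, and deferring to them is consistent with what the paper itself does.
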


\begin{remark}
We point out that, in the case where neither of the two boundaries are natural, it can be shown (\cite{mckean1956}, Theorem 3.1) that $\sigma(\mathcal{L})$ is a countable set, the measures $\rho_{ij}$ are discrete, and consequently the inversion integral \eqref{eq:genfourier_inv} reduces to a countable sum. On the other hand, as we shall see in Subsection \ref{sec:index_examples}, there are many important cases (with at least one natural boundary) where the measures $\rho_{ij}$ are absolutely continuous with respect to the Lebesgue measure and the integral \eqref{eq:prel_kl_inverse} is of the same type as the inversion integrals of the classical integral transforms. (In the presence of natural boundaries, the spectrum $\sigma(\mathcal{L})$ and the measures $\rho_{ij}$ may have both a discrete and an absolutely continuous component; see e.g.\ \cite{linetsky2006}, Section 3.4 for further background.)
\end{remark}

To determine the matrix measure $\rho$ and complete the description of the generalized Fourier transform given above, we can use a complex variable technique which relies on a general result concerning the resolvent kernel of the self-adjoint operator $\mathcal{L}$. Under the above assumptions, it is known (\cite{dunfordschwartz1963}, XIII.3.16; \cite{weidmann1987}, Theorem 7.8) that, for each $\lambda \in \mathbb{C} \setminus \sigma(\mathcal{L})$, the equation $\mathcal{L}u = \lambda u$ has, up to a multiplicative constant, a unique solution square-integrable at $a$ with respect to the measure $r(x) dx$ and satisfying the boundary condition at $a$, as well as a unique solution square-integrable at $b$ with respect to the measure $r(x) dx$ and satisfying the boundary condition at $b$. We denote these solutions as $\phi_1(x,\lambda)$ and $\phi_2(x,\lambda)$ respectively. Then, for any $\lambda \in \mathbb{C} \setminus \sigma(\mathcal{L})$, the resolvent $R(\lambda, \mathcal{L}) = (\mathcal{L} - \lambda )^{-1}$ of the self-adjoint operator $\mathcal{L}$ is given by (\cite{dunfordschwartz1963}, XIII.3.16; \cite{weidmann1987}, Theorem 7.8)
\begin{equation} \label{eq:resolvent_phi}
R(\lambda, \mathcal{L}) g(x) = \int_a^b \mathcal{K}_r(x,y;\lambda) g(y)\, r(y)dy
\end{equation}
where $\mathcal{K}_r(x,y;\lambda)$, the kernel with respect to the measure $r(y)dy$, is given by
\begin{equation} \label{eq:resolvent_kernphi}
\mathcal{K}_r(x,y;\lambda) = 
\begin{cases}
{1 \over \mathcal{W}_p\{\phi_2,\phi_1\}(\lambda)} \phi_1(x,\lambda) \phi_2(y,\lambda), & x < y \\
{1 \over \mathcal{W}_p\{\phi_2,\phi_1\}(\lambda)} \phi_2(x,\lambda) \phi_1(y,\lambda), & x > y 
\end{cases}
\end{equation}
and $\mathcal{W}_p\{f,g\}(\lambda) = (p{\partial f \over \partial x}g - pf{\partial g \over \partial x})(x,\lambda)$ is the generalized Wronskian, which may depend on $\lambda$ but not on $x$ (\cite{weidmann1987}, Theorem 5.1).

\begin{theorem} \label{thm:rhomeasure_tk_formula}
\emph{(\cite{dunfordschwartz1963}, XIII.5.18; \cite{weidmann1987}, Theorem 9.7)} Let $Q$ be a complex neighborhood of some open interval $\Lambda \subseteq \mathbb{R}$. Assume that the functions $w_1(x,\lambda)$, $w_2(x,\lambda)$ from Theorem \ref{thm:genfourier} are continuous on $(a,b) \times Q$, analytically dependent on $\lambda \in Q$, and define, for each fixed $\lambda \in Q$, a basis $\{w_1(\cdot, \lambda), w_2(\cdot, \lambda)\}$ for the space of solutions of $\mathcal{L} u = \lambda u$. Then there exist functions $m_{ij}^\pm(\lambda)$ such that the kernel $\mathcal{K}_r(x,y;\lambda)$ for the resolvent $R(\lambda, \mathcal{L})$ has, for all $\lambda \in Q \setminus\sigma(\mathcal{L})$, the representation
\begin{equation} \label{eq:resolvent_kernrep1}
\mathcal{K}_r(x,y;\lambda) = \begin{cases}
\sum_{i,j=1}^2 m_{ij}^-(\lambda) w_i(x,\lambda) \overline{w_j(y,\overline{\lambda})}, & x < y \\
\sum_{i,j=1}^2 m_{ij}^+(\lambda) w_i(x,\lambda) \overline{w_j(y,\overline{\lambda})}, & x > y.
\end{cases}
\end{equation}
The functions $m_{ij}^\pm(\lambda)$ are analytically dependent on $\lambda \in Q\setminus\sigma(\mathcal{L})$, and given any bounded open interval $(\lambda_1, \lambda_2) \subseteq \Lambda$ we have
\begin{equation} \label{eq:rhomeasure_formula}
\begin{aligned}
\rho_{ij}\bigl((\lambda_1, \lambda_2)\bigr) & = \lim_{\delta \searrow 0} \lim_{\eps \searrow 0} {1 \over 2\pi i} \int_{\lambda_1+\delta}^{\lambda_2-\delta} [m_{ij}^-(\lambda + i \eps) - m_{ij}^-(\lambda - i \eps)]\, d\lambda \\
& = \lim_{\delta \searrow 0} \lim_{\eps \searrow 0} {1 \over 2\pi i} \int_{\lambda_1+\delta}^{\lambda_2-\delta} [m_{ij}^+(\lambda + i \eps) - m_{ij}^+(\lambda - i \eps)]\, d\lambda, & i,j = 1,2
\end{aligned}
\end{equation}
where $\rho$ is the positive matrix measure from Theorem \ref{thm:genfourier}. 
\end{theorem}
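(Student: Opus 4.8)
The plan is to build the representation \eqref{eq:resolvent_kernrep1} directly from the Green's function formula \eqref{eq:resolvent_kernphi} and then extract the measure by Stieltjes inversion of the resolvent. First I would fix $\lambda \in Q \setminus \sigma(\mathcal{L})$ and expand the square-integrable (Weyl) solutions $\phi_1(\cdot,\lambda)$, $\phi_2(\cdot,\lambda)$ in the basis $\{w_1(\cdot,\lambda), w_2(\cdot,\lambda)\}$, writing $\phi_k(x,\lambda) = \sum_i c_{ki}(\lambda)\, w_i(x,\lambda)$. Because $p,q,r$ are real, the functions $\overline{w_j(\cdot,\overline{\lambda})}$ again solve $\mathcal{L}u = \lambda u$ and form a basis, so the same $\phi_k$ also admit an expansion $\phi_k(y,\lambda) = \sum_j d_{kj}(\lambda)\,\overline{w_j(y,\overline{\lambda})}$. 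Substituting the $x$-expansion of $\phi_1$ (resp.\ $\phi_2$) together with the $y$-expansion of $\phi_2$ (resp.\ $\phi_1$) into the two branches of \eqref{eq:resolvent_kernphi} yields exactly \eqref{eq:resolvent_kernrep1}, with
\[
m_{ij}^-(\lambda) = \frac{c_{1i}(\lambda)\, d_{2j}(\lambda)}{\mathcal{W}_p\{\phi_2,\phi_1\}(\lambda)}, \qquad m_{ij}^+(\lambda) = \frac{c_{2i}(\lambda)\, d_{1j}(\lambda)}{\mathcal{W}_p\{\phi_2,\phi_1\}(\lambda)}.
\]
The connection coefficients $c_{ki}, d_{kj}$ are recovered from $\phi_k$ and the $w_i$ by Cramer's rule as ratios of generalized Wronskians, hence are analytic in $\lambda$; since $\phi_1,\phi_2$ depend analytically on $\lambda$ off the spectrum and $\mathcal{W}_p\{\phi_2,\phi_1\}(\lambda) \neq 0$ precisely for $\lambda \notin \sigma(\mathcal{L})$, the functions $m_{ij}^\pm$ are analytic on $Q \setminus \sigma(\mathcal{L})$.

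Next I would deduce \eqref{eq:rhomeasure_formula} by combining the spectral theorem with Stone's formula. By Theorem \ref{thm:genfourier}, $\mathcal{F}$ conjugates $\mathcal{L}$ to multiplication by $\lambda$ on $L_2\bigl([0,\infty);\rho\bigr)$, so its spectral resolution $E(\cdot)$ satisfies $\langle (E(\lambda_2)-E(\lambda_1))f,g\rangle = \int_{(\lambda_1,\lambda_2)} \sum_{ij} (\mathcal{F}f)_i\, \overline{(\mathcal{F}g)_j}\, d\rho_{ij}$. On the other hand, Stone's formula expresses the same quantity through the resolvent:
\[
\langle (E(\lambda_2)-E(\lambda_1))f,g\rangle = \lim_{\delta\searrow 0}\lim_{\eps\searrow 0}\frac{1}{2\pi i}\int_{\lambda_1+\delta}^{\lambda_2-\delta}\bigl\langle [R(\lambda+i\eps)-R(\lambda-i\eps)]f,\, g\bigr\rangle\, d\lambda.
\]
I would then write $\langle R(\lambda\pm i\eps)f,g\rangle$ as the double integral of $\mathcal{K}_r$ against $f(y)\overline{g(x)}\,r(x)r(y)$, insert \eqref{eq:resolvent_kernrep1}, and use the factorized form $w_i(x,\cdot)\,\overline{w_j(y,\cdot)}$ to recognise the inner integrals as the components of $\mathcal{F}f$ and $\mathcal{F}g$. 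Comparing the two expressions for $f,g$ ranging over a dense class forces the density of $\rho_{ij}$ to equal the boundary jump of $m_{ij}^\pm$ across the real axis, which is precisely \eqref{eq:rhomeasure_formula}.

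To justify replacing the piecewise kernel by a single matrix $\rho$ and to prove that the $m^-$ and $m^+$ lines of \eqref{eq:rhomeasure_formula} agree, I would show that $m^+ - m^-$ has no jump across $\Lambda$. This follows from the matching conditions of the Green's function at $x=y$: continuity of $\mathcal{K}_r$ and the unit jump (equal to $-1$) of $p\,\partial_x\mathcal{K}_r$ give the two relations
\[
\sum_{ij}(m_{ij}^+-m_{ij}^-)\,w_i(y,\lambda)\,\overline{w_j(y,\overline{\lambda})} = 0, \qquad \sum_{ij}(m_{ij}^+-m_{ij}^-)\,p(y)\,\partial_x w_i(y,\lambda)\,\overline{w_j(y,\overline{\lambda})} = -1,
\]
which, being independent of $y$, express $m^+ - m^-$ through Wronskians of the $w_i$ alone. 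Since the $w_i$ are analytic on all of $Q$ (including real $\lambda$), the difference $m^+ - m^-$ extends analytically across $\Lambda$ and has vanishing boundary jump; hence the two versions of \eqref{eq:rhomeasure_formula} coincide.

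The main obstacle is the analytic bookkeeping in the third step: the kernel is piecewise, so the double integral does not factor cleanly, and one must argue that, in the limit $\eps\searrow 0$, the contribution distinguishing the $x<y$ branch (carrying $m^-$) from the $x>y$ branch (carrying $m^+$) disappears — which is exactly what the vanishing jump of $m^+ - m^-$ secures — leaving a single quadratic form in $\mathcal{F}f, \mathcal{F}g$. Rigorously interchanging the iterated limit $\lim_\delta\lim_\eps$ with the $x,y$-integrations, and controlling the behaviour of $m_{ij}^\pm(\lambda\pm i\eps)$ as $\eps\searrow 0$ near $\sigma(\mathcal{L})$ (via a dominated-convergence argument on the dense class of compactly supported test functions), is the technical crux of the proof.
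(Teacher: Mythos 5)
First, a point of reference: the paper does not prove this theorem at all --- it is quoted from Dunford--Schwartz (XIII.5.18) and Weidmann (Theorem 9.7) --- so your proposal can only be measured against the classical argument in those sources. Your skeleton is essentially that argument (the Weyl--Titchmarsh--Kodaira proof): (i) expanding $\phi_1,\phi_2$ in the bases $\{w_i(\cdot,\lambda)\}$ and $\{\overline{w_j(\cdot,\overline{\lambda})}\}$ gives \eqref{eq:resolvent_kernrep1}, with $m_{ij}^\pm$ independent of the normalization of the Weyl solutions and analytic off $\sigma(\mathcal{L})$; (ii) the matching conditions at $x=y$ express $m^+-m^-$ through Wronskians of the $w_i$ alone, so this difference is analytic across $\Lambda$, which both reconciles the two lines of \eqref{eq:rhomeasure_formula} and disposes of the non-factoring part of the double integral; (iii) Stone's formula. (A small slip: with the paper's convention $\mathcal{W}_p\{\phi_2,\phi_1\}$ in \eqref{eq:resolvent_kernphi}, the jump of $p\,\partial_x\mathcal{K}_r$ across $x=y$ is $+1$, not $-1$; this does not affect the argument.)

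The genuine gap is the one you flag yourself in the last paragraph, and it is not a removable technicality --- it is the content of \eqref{eq:rhomeasure_formula}. Two things are missing. First, step (iii) uses that $\mathcal{F}$ intertwines $\mathcal{L}$ with multiplication by $\lambda$, i.e.\ $\langle E(B)f,g\rangle=\int_B\sum_{i,j}(\mathcal{F}f)_i\,\overline{(\mathcal{F}g)_j}\,d\rho_{ij}$; this holds in the cited sources but is not explicitly contained in Theorem \ref{thm:genfourier} as stated (which only asserts that $\mathcal{F}$ is an isometric isomorphism with a given inverse), so it must be assumed or proved. Second, and more seriously, the passage from the jump of $\langle [R(\lambda+i\eps)-R(\lambda-i\eps)]f,g\rangle$ to the integrated jump of the individual $m_{ij}^-$ cannot be closed by dominated convergence over a dense class of test functions: near points of $\sigma(\mathcal{L})\cap\Lambda$ the functions $m_{ij}^-(\lambda\pm i\eps)$ need not remain bounded as $\eps\searrow 0$ (at an eigenvalue in $\Lambda$ they blow up like $1/\eps$, and $\rho$ acquires an atom there), so it is only the iterated limit of the \emph{integral} that converges, not the integrand. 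The classical proofs control this by the Herglotz (Nevanlinna) property of the canonical $m$-matrix --- positivity of its imaginary part in the upper half-plane --- which gives weak convergence of the measures ${1\over\pi}\,\mathrm{Im}\,m(\lambda+i\eps)\,d\lambda$ as $\eps\searrow 0$; the analytic change of basis to your $\{w_i\}$ then transports the conclusion to $m_{ij}^\pm$. Without that positivity input (or an equivalent), the comparison over a dense class identifies $\rho$ only weakly against products $(\mathcal{F}f)_i\overline{(\mathcal{F}g)_j}$ and does not yet deliver the interval formula \eqref{eq:rhomeasure_formula}.
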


\begin{remark}
A common way to choose the basis $\{w_1(\cdot, \lambda), w_2(\cdot, \lambda)\}$ for the space of solutions of $\mathcal{L} u = \lambda u$ is to define these functions via the initial conditions
\begin{equation} \label{eq:basissol_initialcond}
w_1(c,\lambda) = 1, \qquad w_1'(c,\lambda) = 0, \qquad w_2(c,\lambda) = 0, \qquad w_2(c, \lambda) = 1
\end{equation}
where $c$ is some fixed point of the interval $(a,b)$. This choice of basis assures that the functions $w_1(x,\lambda)$ and $w_2(x,\lambda)$ are entire functions of $\lambda \in \mathbb{C}$; in particular, such choice assures that the measure $\rho$ can be computed via Equation \eqref{eq:rhomeasure_formula}. But we emphasize that our presentation of the theory allows for a greater freedom in the choice of basis, and in several cases this turns out to be convenient. (See the discussion in \cite{dunfordschwartz1963}, pp.\ 1347-1349.)
\end{remark}

Let us now turn our attention to the parabolic equation associated with the Sturm-Liouville operator $\mathcal{L}$, i.e., to the parabolic partial differential equation (PDE) ${\partial u \over \partial t} = -\mathcal{L}_x u$ (where the subscript indicates that $\mathcal{L}$ acts on the variable $x$). Recall the following definition:
\begin{definition} \label{def:fundsol_mckean}
(cf.\ \cite{mckean1956}, \S1) The function $p_r(t,x,y)$ ($t > 0$, $x,y \in (a,b)$) is said to be a \textit{fundamental solution} (with respect to the measure $r(x)dx$) for the parabolic equation ${\partial u \over \partial t} = -\mathcal{L}_x u$ on the domain $t > 0$, $x \in (a,b)$ and subject to the boundary conditions imposed on $\mathcal{D}(\mathcal{L})$ if it satisfies the following conditions:\vspace{-2pt}
\begin{enumerate}\setlength{\itemsep}{1pt}
\item[(i)] $p_r(t,\cdot,\cdot)$ is positive and symmetric on $(a,b) \times (a,b)$ for each $t > 0$;
\item[(ii)] The derivatives ${\partial^n \over \partial t^n} p_r(t,\cdot,y)$ satisfy the boundary conditions, as well as the PDE ${\partial^n \over \partial t^n} p_r(t,\cdot,y) = (- \mathcal{L}_x)^n p_r(t,\cdot,y)$ ($t>0$, $y \in (a,b)$, $n \in \mathbb{N}$);
\item[(iii)] $\int_a^b p_r(t,x,y)\, r(y)dy \leq 1$ ($t>0$, $x \in (a,b)$);
\item[(iv)] (Chapman-Kolmogorov relation) $p_r(t+s,x,y) = \int_a^b p_r(t,x,\xi) p_r(s,\xi,y) r(\xi)d\xi$ ($t,s>0$, $x,y\in(a,b)$);
\item[(v)] The operators $(S_t \psi)(x) = \int_a^b p_r(t,x,\xi) \psi(\xi)\, r(\xi)d\xi$ constitute a semigroup $(S_t)_{t>0}$ which maps the space $\mathrm{C}_\mathrm{b}(a,b)$ of bounded continuous functions on $(a,b)$ onto itself, so that the PDE ${\partial^n \over \partial t^n} (S_t \psi)(x) = (- \mathcal{L}_x)^n (S_t \psi)(x)$ holds for each $n \in \mathbb{N}$;
\item[(vi)] For any $\varphi \in \mathrm{C}_\mathrm{b}(a,b)$ such that $\mathcal{L}_x \varphi$ is continuous at a neighborhood of $x \in (a,b)$, we have $(S_t \varphi) (x) = \varphi(x) + t\, (-\mathcal{L}_x)\varphi(x) + o(t)$ as $t \searrow 0$. 
\end{enumerate} 
\end{definition}

The connection between the generalized Fourier transform determined by the Sturm-Liouville differential operator and the fundamental solution of the associated parabolic PDE is given in the following theorem:

\begin{theorem} \label{thm:fundsol_integralrep}
Under the assumptions of Theorem \ref{thm:genfourier}, the kernel \eqref{eq:resolvent_kernphi} of the resolvent $R(\mu, \mathcal{L})$ can be written, for $\mu < 0$, as
\begin{equation} \label{eq:spectralexp_resolventkern}
\mathcal{K}_r(x,y;\mu) = \int_{0-}^\infty {1 \over \lambda - \mu} \sum_{i,j=1}^2 w_i(x,\lambda) w_j(y,\lambda)\, d\rho_{ij}(\lambda), \qquad x, y \in (a,b)
\end{equation}
and a fundamental solution of the parabolic PDE ${\partial u \over \partial t} = - \mathcal{L}_x u$ on the domain $t > 0$, $x \in (a,b)$ is given by
\begin{equation} \label{eq:spectralexp_fundsol}
p_r(t,x,y) = \int_{0-}^\infty e^{-t\lambda} \sum_{i,j=1}^2 w_i(x,\lambda) w_j(y,\lambda)\, d\rho_{ij}(\lambda), \qquad t > 0,\; x, y \in (a,b).
\end{equation}
The integrals \eqref{eq:spectralexp_resolventkern} and \eqref{eq:spectralexp_fundsol} converge with respect to the norm of the space $L_2\bigl((a,b);  r(x) dx\bigr)$ relative to each of the variables $x,y$ if the other variables are held fixed.
\end{theorem}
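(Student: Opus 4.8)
The plan is to exploit the fact that the generalized Fourier transform $\mathcal{F}$ of Theorem \ref{thm:genfourier} diagonalizes $\mathcal{L}$, turning it into multiplication by the spectral variable $\lambda$. First I would verify this diagonalization directly: for $f \in \mathcal{D}(\mathcal{L})$, writing $(\mathcal{L}f)(x)\,r(x) = -\frac{d}{dx}(p(x)f'(x)) + q(x)f(x)$ and integrating by parts twice in \eqref{eq:genfourier_tr}, the Lagrange--Green identity yields
\begin{equation*}
(\mathcal{F}(\mathcal{L}f))_i(\lambda) = \lambda\,(\mathcal{F}f)_i(\lambda),
\end{equation*}
where the boundary terms vanish because $f$ obeys the boundary conditions \eqref{eq:feller_boundarycond} imposed on $\mathcal{D}(\mathcal{L})$ and each $w_i(\cdot,\lambda)$ solves $\mathcal{L}u = \lambda u$ (so $\mathcal{L}\overline{w_i}=\lambda\overline{w_i}$ for real $\lambda$). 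Since $\mathcal{F}$ is an isometric isomorphism onto $L_2\bigl([0,\infty);\rho\bigr)$, it follows that the resolvent $R(\mu,\mathcal{L})$ for $\mu<0$ is carried to multiplication by the bounded continuous function $(\lambda-\mu)^{-1}$, and the heat semigroup $e^{-t\mathcal{L}}$ (well defined since $\sigma(\mathcal{L})\subseteq[0,\infty)$) to multiplication by $e^{-t\lambda}$.

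With this in hand, I would read off the kernels from the inversion formula \eqref{eq:genfourier_inv}. Applying $\mathcal{F}^{-1}$ to $(\lambda-\mu)^{-1}\mathcal{F}g$ and substituting the definition \eqref{eq:genfourier_tr} of $(\mathcal{F}g)_i$ gives
\begin{equation*}
R(\mu,\mathcal{L})g(x) = \int_{0-}^\infty \sum_{i,j=1}^2 \frac{1}{\lambda-\mu}\Biggl(\int_a^b g(y)\,\overline{w_i(y,\lambda)}\,r(y)dy\Biggr) w_j(x,\lambda)\,d\rho_{ij}(\lambda);
\end{equation*}
interchanging the order of integration and comparing with \eqref{eq:resolvent_phi} identifies the resulting $y$-kernel as $\mathcal{K}_r(x,y;\mu)$, which—after using that the $w_i$ may be taken real for real $\lambda$ and that $\rho$ is symmetric—yields \eqref{eq:spectralexp_resolventkern}. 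The identical computation with $e^{-t\lambda}$ in place of $(\lambda-\mu)^{-1}$ shows that $e^{-t\mathcal{L}}$ is the integral operator whose kernel is the $p_r(t,x,y)$ of \eqref{eq:spectralexp_fundsol}.

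It then remains to check that this $p_r$ meets the six requirements of Definition \ref{def:fundsol_mckean}. Symmetry in $(x,y)$ is immediate from the formula and the symmetry of $\rho$. For property (ii) I would differentiate \eqref{eq:spectralexp_fundsol} under the integral sign; since $\mathcal{L}_x w_i(x,\lambda) = \lambda\, w_i(x,\lambda)$ and $e^{-t\lambda}$ supplies rapid decay in $\lambda$, this gives $\partial_t^n p_r = (-\mathcal{L}_x)^n p_r$, while membership of $p_r(t,\cdot,y)$ in the range of $\mathcal{F}^{-1}$ (hence in $\mathcal{D}(\mathcal{L})$) furnishes the boundary conditions. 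The semigroup identity $e^{-(t+s)\mathcal{L}} = e^{-t\mathcal{L}}e^{-s\mathcal{L}}$, read at the level of kernels, is exactly the Chapman--Kolmogorov relation (iv), and together with the mapping and differentiation properties gives (v); the short-time expansion (vi) is the statement that $\mathcal{L}$ generates $(S_t)$, i.e.\ $e^{-t\mathcal{L}} = I - t\mathcal{L} + o(t)$ on suitable $\varphi$.

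The main obstacles are twofold. The analytic one is the rigorous justification of the interchanges of integration (Fubini) and of differentiation under the integral sign: these require quantitative control of the growth of $w_i(x,\lambda)$ and of the matrix measure $\rho$ as $\lambda\to\infty$, uniformly on compact $x$-sets, which is precisely where the convergence clauses of Theorem \ref{thm:genfourier} and of the present theorem must be invoked. The deeper obstacle is that the spectral formula \eqref{eq:spectralexp_fundsol} does not visibly produce a positive kernel, so the positivity in (i) and the sub-probability bound (iii) cannot be extracted from the expansion alone; I expect to obtain these from the fact that $\mathcal{L}$ is a (sub-)Markovian diffusion generator—via the maximum principle or the construction of the associated diffusion—as developed in McKean's framework \cite{mckean1956}, rather than from the eigenfunction representation itself.
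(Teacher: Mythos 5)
Your proposal is correct and follows essentially the same route as the paper, which simply delegates the resolvent-kernel representation to Naimark (\S21.2, Corollary 2) and the heat-kernel expansion together with the verification of the fundamental-solution properties to McKean (\S4); your outline is a faithful unpacking of what those citations contain. In particular, you correctly identify that positivity and the sub-probability bound in Definition \ref{def:fundsol_mckean} cannot be read off from the eigenfunction expansion and must come from the Markovian/maximum-principle side of McKean's framework, which is exactly where the paper's cited proof obtains them.
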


\begin{proof} 
The representation \eqref{eq:spectralexp_resolventkern} for the kernel of the resolvent is a consequence of Corollary 2 in \cite{naimark1968}, \S21.2. Then, the expansion \eqref{eq:spectralexp_fundsol} follows from the results in \cite{mckean1956}, \S4. (The mentioned results of \cite{naimark1968} and \cite{mckean1956} are stated for the case where the basis $\{w_1(\cdot, \lambda), w_2(\cdot, \lambda)\}$ is chosen as in \eqref{eq:basissol_initialcond}, but the same proofs work in our setting.)
\end{proof}

\section{Index transforms as a particular case} \label{sec:index_examples}

Various index transforms can be obtained as a particular case of the generalized Fourier transform of Theorem \ref{thm:genfourier}, provided that one chooses the coefficients of the Sturm-Liouville operator $\mathcal{L}$ so that the kernel of the index transform is a solution of the differential equation $\mathcal{L} u = \lambda u$. This will be better understood through the following examples, which are devoted to the Kontorovich-Lebedev, index Whittaker and Mehler-Fock transforms.

The deduction of the Kontorovich-Lebedev transform through the spectral theory for Sturm-Liouville differential operators was already given in \cite{titchmarsh1962}, \S4.15 (see also \cite{tuanzayed2002}, Example 7), where a different technique was used for the computation of the spectral matrix. In the example below we present a more direct derivation (in our approach, the spectral matrix can be directly obtained via the formula \eqref{eq:rhomeasure_formula}, so unlike in \cite{titchmarsh1962} we will not resort to a transformation of the modified Bessel differential operator for obtaining the spectral expansion), and discuss the connection with the associated parabolic equation.

\begin{example} \label{exam:kl_transf}
(Kontorovich-Lebedev transform)
Consider the case $p(x) = q(x) = x$ and $r(x) = {1 \over x}$ with $a=0$ and $b = \infty$, i.e.,
\begin{equation} \label{eq:example_kl_Lop}
\mathcal{L} = - x^2{d^2 \over d x^2} - x{d \over d x} + x^2, \qquad 0 < x < \infty.
\end{equation}

We have $\mathcal{S}[x,y] = \int_x^y {1 \over t} dt = \log y - \log x$, and it easily follows that $I_a = I_b = J_a = J_b = \infty$, meaning that both boundaries of \eqref{eq:example_kl_Lop} are natural. Since $\int_0^1 {dx \over x} = \int_1^\infty {dx \over x} = \infty$, no boundary conditions need to be imposed to $\mathcal{D}(\mathcal{L})$.

The functions
\begin{equation} \label{eq:example_kl_w1w2}
w_1(x,\lambda) = K_{i\sqrt{\lambda}}(x), \qquad\quad w_2(x,\lambda) = 2 I_{-i\sqrt{\lambda}}(x) - {2 \over \pi} \sin(i\sqrt{\lambda} \pi) K_{i\sqrt{\lambda}}(x)
\end{equation}
are continuous on $(x,\lambda) \in \mathbb{R}^2$ and, for each fixed $\lambda \in \mathbb{C}$, they constitute a basis for the space of solutions of $\mathcal{L}u = \lambda u$. By Theorem \ref{thm:genfourier}, the generalized Fourier transform
\begin{equation} \label{eq:example_kl_isom1}
\begin{gathered}
\mathcal{F}: L_2\bigl([0,\infty); \tfrac{dx}{x}\bigr) \to L_2\bigl([0,\infty); \rho\bigr) \\[1.5pt]
(\mathcal{F}f)(\lambda) = \int_0^\infty f(x)\, \Bigl( w_1(x,\lambda),\; w_2(x,\lambda) \Bigr) {dx \over x}, \qquad (\mathcal{F}^{-1} \widetilde{f})(x) = \int_{0-}^\infty \widetilde{f}(\lambda) \cdot \Bigl( w_1(x,\lambda),\; w_2(x,\lambda) \Bigr) d\rho(\lambda)
\end{gathered}
\end{equation}
is an isomorphism between the two spaces.

To determine the measure $\rho$, we start by noting that, by virtue of \eqref{eq:besselIK_asymp_zero} and \eqref{eq:besselIK_asymp_infty}, the unique solutions of $\mathcal{L}u = \lambda u$ ($\lambda \in \mathbb{C}$, $\mathrm{Im}\,\lambda \neq 0$) belonging to $L_2\bigl((0,1); {dx \over x}\bigr)$ and $L_2\bigl((1,\infty); {dx \over x}\bigr)$ are $\phi_1(x,\lambda) = I_{-i\sqrt{\lambda}}(x)$ and $\phi_2(x,\lambda) = K_{i\sqrt{\lambda}}(x)$ respectively. (Here, with $\lambda = |\lambda| e^{i\theta}$, $0 \leq \theta < 2\pi$, we take $\sqrt{\lambda} = |\lambda|^{1/2} e^{i\theta/2}$.) By \eqref{eq:besselIK_wronsk} we have $\mathcal{W}_p\{\phi_2, \phi_1\} = 1$, so the resolvent kernel in \eqref{eq:resolvent_kernphi} is
\[
\mathcal{K}_r(x,y; \lambda) = \begin{cases}
I_{-i\sqrt{\lambda}}(x) K_{i\sqrt{\lambda}}(y), & x < y\\
K_{i\sqrt{\lambda}}(x) I_{-i\sqrt{\lambda}}(y), & x > y
\end{cases}.
\]
Note that, for $0 \leq \lambda_1 < \lambda_2 < \infty$, the functions $w_1(x,\lambda)$ and $w_2(x,\lambda)$ defined in \eqref{eq:example_kl_w1w2} are analytically dependent on $\lambda$ belonging to a complex neighborhood of $(\lambda_1,\lambda_2)$. (The points in the positive real axis are branch points for $\sqrt{\lambda}$; however, using \eqref{eq:besselIK_conn} we see that both $K_{i\tau}(e^x)$ and $2 I_{-i\tau}(e^x) - {2 \over \pi} \sin(i\tau \pi) K_{i\tau}(e^x)$ are even with respect to $\tau$, and it follows that $w_1(x,\lambda)$ and $w_2(x,\lambda)$, as well as their derivatives with respect to $\lambda$, are continuous at the points $0 < \lambda_0 < \infty$.) Furthermore, it is easy to check that the functions $m_{ij}^\pm(\lambda)$ in the representation \eqref{eq:resolvent_kernrep1} are given by
\[
m_{11}^+(\lambda) = {1 \over \pi} \sin(i\pi\sqrt{\lambda}), \qquad m_{12}^+(\lambda) = {1 \over 2}, \qquad m_{21}^+(\lambda) = m_{22}^+(\lambda) = 0, \qquad m_{ij}^-(\lambda) = m_{ji}^+(\lambda).
\]
By Theorem \ref{thm:rhomeasure_tk_formula} it follows that, for $0 \leq \lambda_1 < \lambda_2 < \infty$,
\begin{align*}
\rho\bigl((\lambda_1, \lambda_2)\bigr) & = \lim_{\delta \searrow 0} \lim_{\eps \searrow 0} {1 \over 2\pi i} \int_{\lambda_1+\delta}^{\lambda_2-\delta} \begin{pmatrix}
{1 \over \pi}(\sin(i\pi\sqrt{\lambda + i\eps}) - \sin(i\pi\sqrt{\lambda - i\eps})) & 0 \\
0 & 0
\end{pmatrix}\, d\lambda \\
& = {1 \over \pi^2} \int_{\lambda_1}^{\lambda_2} \begin{pmatrix}
\sinh(\pi\sqrt{\lambda}) & 0 \\
0 & 0
\end{pmatrix}\, d\lambda.
\end{align*}

Given that $0$ is not an eigenvalue of $\mathcal{L}$ (because the equation $\mathcal{L}u = 0$ has no nontrivial solutions belonging to $L_2\bigl((0,\infty); {dx \over x}\bigr)$, cf.\ \eqref{eq:besselIK_asymp_zero} and \eqref{eq:besselIK_asymp_infty}), we have that $\rho(\{0\})$ is the zero matrix. (See the remark in pp.\ 1360-1361 of \cite{dunfordschwartz1963}.) Thus $\rho_{11}(\cdot)$ is the only nonzero measure, and letting $\tau = \sqrt{\lambda}$ we conclude that the isomorphism \eqref{eq:example_kl_isom1} reduces to
\begin{equation} \label{eq:exam_kl_final}
\begin{gathered}
\mathcal{F}: L_2\bigl((0,\infty); \tfrac{dx}{x}\bigr) \to L_2\bigl( (0,\infty); \tau\sinh(\pi\tau) d\tau \bigr), \\[1.5pt]
(\mathcal{F}f)(\tau) = \int_0^\infty f(x) K_{i\tau}(x)\, {dx \over x}, \qquad (\mathcal{F}^{-1} \widetilde{f})(x) = {2 \over \pi^2} \int_0^\infty \widetilde{f}(\tau) K_{i\tau}(x)\, \tau \sinh(\pi\tau)\, d\tau.
\end{gathered}
\end{equation}
which is the Kontorovich-Lebedev transform \eqref{eq:prel_kl_transf}, \eqref{eq:prel_kl_inverse}. Moreover, Theorem \ref{thm:fundsol_integralrep} yields that the fundamental solution (with respect to the measure ${dx \over x}$) of the parabolic equation ${\partial u \over \partial t} = x^2 {\partial^2 u \over \partial x^2} + x {\partial u \over \partial x} - x^2 u$ on the domain $x \in (0, \infty)$, $t \geq 0$ is given by
\begin{equation} \label{eq:exam_kl_fundsol}
p_r(t,x,y) = {2 \over \pi^2} \int_{0}^\infty e^{-t\tau^2} K_{i\tau}(x) K_{i\tau}(y)\, \tau\sinh(\pi\tau)\, d\tau.
\end{equation}
The corresponding fundamental solution with respect to the Lebesgue measure, $p(t,x,y) = {1 \over x} p_r(t,x,y)$, has been introduced and studied by the second author in \cite{yakubovich2011}, where it was called the \textit{heat kernel for the Kontorovich-Lebedev transform}. The spectral expansion approach shows that the heat kernel can be defined in a similar way for any generalized Fourier transform associated with a Sturm-Liouville differential operator of the form \eqref{eq:sturmliouv_L}, and that the fundamental solution property extends to the general case.
\end{example}

\begin{example} \label{exam:iw_transf} (Index Whittaker transform) 
It will now be shown that if the example above is generalized by replacing the function $q(x) = x$ by the function $q(x) = {1 \over x}(x-\alpha)^2$ (where $\alpha \in (-\infty, {1 \over 2})$ is a fixed parameter), so that
\begin{equation} \label{eq:example_iw_Lop}
\mathcal{L} = - x^2{d^2 \over d x^2} - x{d \over d x} + (x-\alpha)^2, \qquad 0 < x < \infty.
\end{equation}
then the resulting generalized Fourier transform becomes the index Whittaker transform, which is a family of integral transforms that includes the Kontorovich-Lebedev transform as a particular case. This is a direct generalization, since from \eqref{eq:whittaker_besselred} it follows that for $\alpha = 0$ the expressions below coincide with those of Example \ref{exam:kl_transf}.

As in the previous example, it is easily seen that both boundaries of \eqref{eq:example_iw_Lop} are natural and that no boundary conditions are necessary. A convenient basis for the space of solutions of $\mathcal{L}u = \lambda u$ is
\begin{equation} \label{eq:example_iw_w1w2}
\begin{aligned}
& \,w_1(x,\lambda) = \sqrt{\pi \over 2x} W_{\alpha, i\sqrt{\lambda - \alpha^2}}(2x), \\
& \begin{aligned}
w_2(x,\lambda) = & {1 \over \sqrt{2\pi x}} \biggl[  {2\,\Gamma({1 \over 2}-\alpha -i\sqrt{\lambda - \alpha^2}) \over \Gamma(1-2i\sqrt{\lambda - \alpha^2})} M_{\alpha, -i\sqrt{\lambda - \alpha^2}}(2x)  \\
& \;\; + {\Gamma({1 \over 2}-\alpha +i\sqrt{\lambda - \alpha^2})\, \Gamma({1 \over 2}-\alpha -i\sqrt{\lambda - \alpha^2})  \over \Gamma(1+2i\sqrt{\lambda - \alpha^2})\, \Gamma(-2i\sqrt{\lambda - \alpha^2})} W_{\alpha, i\sqrt{\lambda - \alpha^2}}(2x)  \biggr]
\end{aligned}
\end{aligned}
\end{equation}
To compute the measure $\rho$ for the transform \eqref{eq:example_kl_isom1}, we observe that, according to \eqref{eq:whittaker_asymp_zero} and \eqref{eq:whittaker_asymp_infty}, the solutions of $\mathcal{L}u = \lambda u$ which satisfy the integrability conditions are
\[
\phi_1(x,\lambda) = {\Gamma({1 \over 2}-\alpha -i\sqrt{\lambda - \alpha^2}) \over \Gamma(1-2i\sqrt{\lambda - \alpha^2})} {1 \over \sqrt{2\pi x}} M_{\alpha, -i\sqrt{\lambda - \alpha^2}}(2x), \qquad\;\; \phi_2(x,\lambda) = \sqrt{\pi \over 2x} W_{\alpha, i\sqrt{\lambda - \alpha^2}}(2x)
\]
(same choice of branch of $\sqrt{\cdot}$). From \eqref{eq:whittaker_wronsk} it easily follows that $\mathcal{W}_p\{\phi_2,\phi_1\} = 1$, hence
\[
\mathcal{K}_r(x,y; \lambda) = \begin{cases}
{\Gamma({1 \over 2}-\alpha -i\sqrt{\lambda - \alpha^2}) \over \Gamma(1-2i\sqrt{\lambda - \alpha^2})} {1 \over \sqrt{2x}} M_{\alpha, -i\sqrt{\lambda - \alpha^2}}(2x) {1 \over \sqrt{2y}} W_{\alpha, i\sqrt{\lambda - \alpha^2}}(2y), & x < y\\[5pt]
{\Gamma({1 \over 2}-\alpha -i\sqrt{\lambda - \alpha^2}) \over \Gamma(1-2i\sqrt{\lambda - \alpha^2})} {1 \over \sqrt{2x}} W_{\alpha, i\sqrt{\lambda - \alpha^2}}(2x) {1 \over \sqrt{2y}} M_{\alpha, -i\sqrt{\lambda - \alpha^2}}(2y), & x > y
\end{cases}.
\]
If $\lambda_1 < \lambda_2$ are real numbers with $\alpha^2 \notin (\lambda_1,\lambda_2)$, we have that functions defined in \eqref{eq:example_iw_w1w2} are analytically dependent on $\lambda$ belonging to a complex neighborhood of $(\lambda_1,\lambda_2)$. (The analyticity of $w_2(x,\lambda)$ is due to its evenness with respect to $\tau = \sqrt{\lambda-\alpha^2}$, which follows from \eqref{eq:whittaker_conn}.) The functions defined by \eqref{eq:resolvent_kernrep1} are
\begin{gather*}
m_{11}^+(\lambda) = - {\Gamma({1 \over 2}-\alpha +i\sqrt{\lambda - \alpha^2})\, \Gamma({1 \over 2}-\alpha -i\sqrt{\lambda - \alpha^2})  \over 2\pi \, \Gamma(1+2i\sqrt{\lambda - \alpha^2})\, \Gamma(-2i\sqrt{\lambda - \alpha^2})}, \\
m_{12}^+(\lambda) = {1 \over 2}, \qquad\; m_{21}^+(\lambda) = m_{22}^+(\lambda) = 0, \qquad\; m_{ij}^-(\lambda) = m_{ji}^+(\lambda).
\end{gather*}
Consequently
\begin{align*}
& \! \lim_{\eps \searrow 0} \bigl[m_{12}^-(\lambda + i \eps) - m_{12}^-(\lambda - i \eps)\bigr] = 0, \qquad\;\;\lambda \in \mathbb{R} \setminus \{\tfrac{1}{4}\}, \\
& \!\lim_{\eps \searrow 0} \bigl[m_{11}^-(\lambda + i \eps) - m_{11}^-(\lambda - i \eps)\bigr] = 0, \qquad\;\; \lambda < \alpha^2, \\
& \!\begin{aligned}
\lim_{\eps \searrow 0} \bigl[m_{11}^-(\lambda + i \eps) - m_{11}^-(\lambda - i \eps)\bigr] & = {1 \over 2\pi^2} \Bigl|\Gamma\bigl(\tfrac{1}{2}-\alpha +i\sqrt{\lambda - \alpha^2}\bigr)\Bigr|^2 \!\Bigl(\sin\bigl(2\pi i \sqrt{\lambda - \alpha^2}\bigr) - \sin\bigl(-2\pi i \sqrt{\lambda - \alpha^2}\bigr)\!\Bigr)\\
& = {i \over \pi^2} \sinh\bigl(2\pi \sqrt{\lambda - \alpha^2}\bigr) \Bigl|\Gamma\bigl(\tfrac{1}{2}-\alpha +i\sqrt{\lambda - \alpha^2}\bigr)\Bigr|^2\!, \qquad\;\; \lambda > \alpha^2,
\end{aligned}
\end{align*}
so we have  $\rho\bigl((\lambda_1, \lambda_2)\bigr) = 0$ for $-\infty < \lambda_1 < \lambda_2 < \alpha^2$ and 
\[
\rho\bigl((\lambda_1, \lambda_2)\bigr) = {1 \over 2\pi^3} \int_{\lambda_1}^{\lambda_2} \begin{pmatrix}
\sinh\bigl(2\pi \sqrt{\lambda - \alpha^2}\bigr) \Bigl|\Gamma\bigl(\tfrac{1}{2}-\alpha +i\sqrt{\lambda - \alpha^2}\bigr)\Bigr|^2 & 0 \\
0 & 0
\end{pmatrix}\, d\lambda, \qquad \alpha^2 < \lambda_1 < \lambda_2 < +\infty.
\]
In the same manner as in Example \ref{exam:kl_transf} we can see that $\rho(\{\alpha^2\}) = 0$. After letting $\tau = \sqrt{\lambda - \alpha^2}$, the conclusion is that the generalized Fourier transform associated to the operator \eqref{eq:example_iw_Lop} is the isomorphism
\begin{equation} \label{eq:exam_iw_final}
\begin{aligned}
& \qquad \mathcal{F}: L_2\bigl((0,\infty); \tfrac{dx}{x}\bigr) \to L_2\Bigl( (0,\infty); \tau\sinh(2\pi \tau) \bigl|\Gamma\bigl(\tfrac{1}{2}-\alpha +i\tau\bigr)\bigr|^2 d\tau \Bigr) \\
& (\mathcal{F}f)(\tau) = \sqrt{\pi \over 2} \int_0^\infty f(x) W_{\alpha, i\tau}(2x)\, x^{-3/2} dx \\
& (\mathcal{F}^{-1} \widetilde{f})(x) = {\pi^{-5/2} \over \sqrt{2x}} \int_0^\infty \widetilde{f}(\tau) W_{\alpha, i\tau}(2x)\, \tau\sinh(2\pi \tau) \Bigl|\Gamma\bigl(\tfrac{1}{2}-\alpha +i\tau\bigr)\Bigr|^2 d\tau.
\end{aligned}
\end{equation}
Writing $f(x) = (2x)^{-1/2} g(2x)$, we obtain the index Whittaker transform \eqref{eq:prel_iw_transf}, \eqref{eq:prel_iw_inverse}. In addition, Theorem \ref{thm:fundsol_integralrep} assures that the fundamental solution (with respect to the measure ${dx \over x}$) of the parabolic equation ${\partial u \over \partial t} = x^2 {\partial^2 u \over \partial x^2} + x {\partial u \over \partial x} - (x-\alpha)^2 u$ on the domain $x \in (0, \infty)$, $t \geq 0$ is given by
\[
p_r(t,x,y) = {2 \over \pi^2 \sqrt{xy}} \int_{0}^\infty e^{-t(\tau^2+\alpha^2)} W_{\alpha, i\tau}(2x) W_{\alpha, i\tau}(2y)\,\tau\sinh(2\pi \tau) \Bigl|\Gamma\bigl(\tfrac{1}{2}-\alpha +i\tau\bigr)\Bigr|^2 d\tau.
\]
\end{example}

\begin{remark}
If we instead write $f(x) = \pi^{-{1 \over 2}} 2^{{1 \over 2}-2\alpha} e^{-x} x^{{1 \over 2} - \alpha} g({1 \over 4x})$, we obtain the integral transform
\[
\begin{aligned}
& (\mathcal{G}g)(\tau) = \int_0^\infty g(x)\, x^\alpha e^{1 \over 4x} W_{\alpha, i\tau}\Bigl({1 \over 2x}\Bigr)\, x^{-2\alpha} e^{-{1 \over 2x}} dx \\
& (\mathcal{G}^{-1} \widetilde{g})(x) = {2 \over \pi^2} \int_0^\infty \widetilde{g}(\tau)\, x^\alpha e^{1 \over 4x} W_{\alpha, i\tau}\Bigl({1 \over 2x}\Bigr)\, \tau\sinh(2\pi \tau) \Bigl|\Gamma\bigl(\tfrac{1}{2}-\alpha +i\tau\bigr)\Bigr|^2 d\tau.
\end{aligned}
\]
which is the generalized Fourier transform
\[
\mathcal{G}: L_2\bigl((0,\infty); \tfrac{1}{2} x^{-2\alpha} e^{-{1 \over 2x}} dx\bigr) \to L_2\Bigl( (0,\infty); \tau\sinh(2\pi \tau) \bigl|\Gamma\bigl(\tfrac{1}{2}-\alpha +i\tau\bigr)\bigr|^2 d\tau \Bigr)
\]
determined by the operator
\[
\mathcal{A} = -2x^2 {d^2 \over dx^2} - (4(1-\alpha) x + 1) {d \over dx}, \qquad 0 < x < \infty,
\]
i.e., by the operator \eqref{eq:sturmliouv_L} with $p(x) = x^{2(1-\alpha)} e^{-{1 \over 2x}}$, $q(x) = 0$ and $r(x) = {1 \over 2} x^{-2\alpha} e^{-{1 \over 2x}}$. The fundamental solution (with respect to the measure $\tfrac{1}{2} x^{-2\alpha} e^{-{1 \over 2x}} dx$) of the associated parabolic equation is
\[
q_r(t,x,y) = {4 \over \pi^2}\! \int_{0}^\infty\! e^{-t(2\tau^2 + {1 \over 2}(1-2\alpha)^2)} x^\alpha e^{1 \over 4x} W_{\alpha, i\tau}\Bigl({1 \over 2x}\Bigr) y^\alpha e^{1 \over 4y} W_{\alpha, i\tau}\Bigl({1 \over 2y}\Bigr)\,\tau\sinh(2\pi \tau) \Bigl|\Gamma\bigl(\tfrac{1}{2}-\alpha +i\tau\bigr)\Bigr|^2 d\tau.
\]
This fundamental solution has a wide range of applications in mathematical finance; in particular, it is related to the Asian option pricing problem. See Section 4.6 of \cite{linetsky2006} and the references therein.
\end{remark}

The next example is about the (generalized) Mehler-Fock transform, which is also a particular case of a spectral expansion associated with a Sturm-Liouville differential operator. Again, we present a direct derivation, applying the framework of Subsection \ref{sec:spectral_genresults}. (For another approach, see \cite{neretin2001}.)

\begin{example} (Mehler-Fock transform) \label{exam:mf_transf}
Consider now the case $p(x) = (x^2-1)$, $q(x) = {\mu^2 \over x^2-1}$ and $r(x) = 1$, with $a=1$ and $b = \infty$, i.e.,
\begin{equation} \label{eq:example_mf_Lop}
\mathcal{L} = -{d \over d x} \biggl[(x^2-1)  {d \over d x}\biggr] + {\mu^2 \over x^2-1} = - (x^2-1){d^2 \over d x^2} - 2x {d \over d x} + {\mu^2 \over x^2-1}, \qquad 1 < x < \infty.
\end{equation}
(In what follows we assume that $0 \leq \mu < 1$ is a fixed constant.)

Let us determine Feller's boundary classification for the two endpoints. We have
\[
\mathcal{S}[x,y] = \int_x^y {1 \over z^2-1}\, dz = {1 \over 2}\biggl[ \log\biggl({y-1 \over y+1}\biggr) - \log\biggl({x-1 \over x+1}\biggr) \biggr], \qquad x, y \in (1, \infty)
\]
and therefore 
\begin{gather*}
I_a = I_b = J_b = \infty; \qquad\quad
J_a < \infty \;\, \text{ if } \mu = 0, \qquad J_a = \infty \;\, \text{ if } 0 < \mu < 1.
\end{gather*}
Thus $1$ is an entrance boundary if $\mu = 0$ and a natural boundary if $0 < \mu < 1$, while $\infty$ is a natural boundary in either case. Since $\int_1^2 dx < \infty$ and $\int_2^\infty dx = \infty$, it follows from \eqref{eq:feller_boundarycond} that both for $\mu = 0$ and for $0 < \mu < 1$ the suitable boundary condition at $1$ is
\begin{equation} \label{eq:example_mf_bc}
\lim_{x \searrow 1} {(x^2-1) f'(x)} = 0
\end{equation}
whereas at $\infty$ no boundary condition is necessary.

The functions
\begin{equation} \label{eq:example_mf_w1w2}
w_1(x,\lambda) = P_{-{1\over 2} + i\sqrt{\lambda - {1 \over 4}}}^{-\mu}(x), \qquad\quad w_2(x,\lambda) = \begin{cases}
\bsy{Q}_{-{1\over 2} - i\sqrt{\lambda - {1 \over 4}}}^{\mu}(x), & \: \mathrm{Re}\,\lambda < {1 \over 4} \\
\bsy{Q}_{-{1\over 2} + \sqrt{{1 \over 4} - \lambda}}^{\mu}(x), & \: \mathrm{Re}\,\lambda \geq {1 \over 4} 
\end{cases}
\end{equation}
are continuous on $(x,\lambda) \in \mathbb{R}^2$ and, for each fixed $\lambda \in \mathbb{C}$, they constitute a basis for the space of solutions of $\mathcal{L}u = \lambda u$. (The choice of branch for $\sqrt{\cdot}$ is the same as in the previous examples.) By Theorem \ref{thm:genfourier}, we have an isomorphism
\begin{equation} \label{eq:example_mf_isom1}
\begin{aligned}
& \qquad\quad \mathcal{F}: L_2\bigl((1,\infty); dx\bigr) \to L_2\bigl([0,\infty); \rho\bigr) \\[1pt]
& (\mathcal{F}f)(\lambda) = \int_{1}^\infty f(x)\, \Bigl( P_{-{1\over 2} + i\sqrt{\lambda - {1 \over 4}}}^{-\mu}(x),\;\: w_2(x,\lambda) \Bigr) dx \\
& (\mathcal{F}^{-1} \widetilde{f})(x) = \int_{0-}^\infty \widetilde{f}(\lambda) \cdot \Bigl( P_{-{1\over 2} + i\sqrt{\lambda - {1 \over 4}}}^{-\mu}(x),\;\: w_2(x,\lambda) \Bigr) d\rho(\lambda).
\end{aligned}
\end{equation}

Now, the unique solution of $\mathcal{L}u = \lambda u$ ($\lambda \in \mathbb{C}$, $\mathrm{Im}\,\lambda \neq 0$) belonging to $L_2\bigl((1,2]; dx\bigr)$ and satisfying the boundary condition at $x=1$ is, by \eqref{eq:legendreP_deriv} and \eqref{eq:legendre_asymp_one},
\[
\phi_1(x,\lambda) = P_{-{1\over 2} + i\sqrt{\lambda - {1 \over 4}}}^{-\mu}(x).
\]
The unique solution belonging to $L_2\bigl([2,\infty); dx\bigr)$ is, by \eqref{eq:legendre_asymp_infty},
\[
\phi_2(x,\lambda) = \bsy{Q}_{-{1\over 2} - i\sqrt{\lambda - {1 \over 4}}}^{\mu}(x).
\]
From \eqref{eq:legendre_wronsk}, their generalized Wronskian is $\mathcal{W}_p\{\phi_2, \phi_1\}(\lambda) = \Bigl(\Gamma\bigl({1\over 2} + \mu - i{\sqrt{\lambda - {1 \over 4}}}\,\bigr)\Bigr)^{-1}$, so the kernel of the resolvent \eqref{eq:resolvent_phi} is
\[
K(x,y; \lambda) = \begin{cases}
\Gamma\Bigl({1\over 2} + \mu - i\sqrt{\lambda - {1 \over 4}}\,\Bigr)\, P_{-{1\over 2} + i\sqrt{\lambda - {1 \over 4}}}^{-\mu}(x)\, \bsy{Q}_{-{1\over 2} - i\sqrt{\lambda - {1 \over 4}}}^{\mu}(y), & \;x < y\\
\Gamma\Bigl({1\over 2} + \mu - i\sqrt{\lambda - {1 \over 4}}\,\Bigr)\,\bsy{Q}_{-{1\over 2} - i\sqrt{\lambda - {1 \over 4}}}^{\mu}(x)\, P_{-{1\over 2} + i\sqrt{\lambda - {1 \over 4}}}^{-\mu}(y), & \;x > y
\end{cases}
\]

The functions $w_1(x,\lambda)$ and $w_2(x,\lambda)$ defined in \eqref{eq:example_mf_w1w2} are analytically dependent on $\lambda$ belonging to a complex neighborhood of any open interval $I$ of the real axis with ${1 \over 4} \notin I$. (The analyticity of $w_1$ follows from the evenness property \eqref{eq:legendreP_even}.) For all $\lambda$ with $\mathrm{Im}\,\lambda \neq 0$ we have
\[
\overline{w_1(x,\overline{\lambda})} = w_1(x,\lambda), \qquad\quad \overline{w_2(x,\overline{\lambda})} = \begin{cases}
\bsy{Q}_{-{1\over 2} - i\sqrt{\lambda - {1 \over 4}}}^{\mu}(x), & \: \mathrm{Re}\,\lambda < {1 \over 4} \\
\bsy{Q}_{-{1\over 2} - \sqrt{{1 \over 4} - \lambda}}^{\mu}(x), & \: \mathrm{Re}\,\lambda \geq {1 \over 4} 
\end{cases}
\]
so, using \eqref{eq:legendre_conn}, we deduce that the functions $m_{ij}^\pm(\lambda)$ in the representation \eqref{eq:resolvent_kernrep1} are given by
\begin{gather*}
m_{11}^-(\lambda) = 
\begin{cases}
-\cos\Bigl( \pi\bigl({1 \over 2} - \sqrt{{1 \over 4} - \lambda}\, \bigr) \Bigr) \Gamma\Bigl({1 \over 2} +\mu + \sqrt{{1 \over 4} - \lambda}\,\Bigr) \Gamma\Bigl({1 \over 2} +\mu - \sqrt{{1 \over 4} - \lambda}\,\Bigr), & \mathrm{Re}\,\lambda > {1 \over 4},\, \mathrm{Im}\, \lambda < 0 \\
0, & \text{otherwise}
\end{cases} \\
m_{12}^-(\lambda) =
\begin{cases}
\Gamma\Bigl({1 \over 2} +\mu - \sqrt{{1 \over 4} - \lambda}\,\Bigr), & \: \mathrm{Re}\,\lambda > {1 \over 4} \\
\Gamma\Bigl({1 \over 2} +\mu - i\sqrt{\lambda - {1 \over 4}}\,\Bigr), & \: \mathrm{Re}\,\lambda < {1 \over 4}
\end{cases} \\
m_{21}^-(\lambda) = m_{22}^-(\lambda) = 0, \qquad m_{ij}^+(\lambda) = m_{ji}^-(\lambda).
\end{gather*}

For $\lambda \in ({1 \over 4}, \infty)$ we have
\begin{gather*}
\lim_{\eps \searrow 0} \bigl[m_{11}^-(\lambda + i \eps) - m_{11}^-(\lambda - i \eps)\bigr] = \cos\Bigl( \pi\bigl(\tfrac{1}{2} + i\sqrt{\lambda - \tfrac{1}{4}}\, \bigr) \Bigr) \Bigl|\Gamma\Bigl(\tfrac{1}{2} +\mu + i\sqrt{\lambda - \tfrac{1}{4}}\,\Bigr)\Bigr|^2 \\
\lim_{\eps \searrow 0} \bigl[m_{12}^-(\lambda + i \eps) - m_{12}^-(\lambda - i \eps)\bigr] = 0,
\end{gather*}
therefore Theorem \ref{thm:rhomeasure_tk_formula} assures that, for ${1 \over 4} \leq \lambda_1 < \lambda_2 < \infty$,
\begin{align*}
\rho\bigl((\lambda_1, \lambda_2)\bigr) & = \lim_{\delta \searrow 0} {1 \over 2\pi i} \int_{\lambda_1+\delta}^{\lambda_2-\delta} \!\begin{pmatrix}
\cos\Bigl( \pi\bigl(\tfrac{1}{2} + i\sqrt{\lambda - \tfrac{1}{4}}\, \bigr) \Bigr) \Bigl|\Gamma\Bigl(\tfrac{1}{2} +\mu + i\sqrt{\lambda - \tfrac{1}{4}}\,\Bigr)\Bigr|^2 & 0 \\
0 & 0
\end{pmatrix}\, d\lambda \\
& = {1 \over 2\pi} \int_{\lambda_1}^{\lambda_2} \begin{pmatrix}
\sinh\Bigl(\pi\sqrt{\lambda - \tfrac{1}{4}}\,\Bigr) \Bigl|\Gamma\Bigl(\tfrac{1}{2} +\mu + i\sqrt{\lambda - \tfrac{1}{4}}\,\Bigr)\Bigr|^2 & 0 \\
0 & 0
\end{pmatrix}\, d\lambda.
\end{align*}
For $\lambda \in (-\infty, {1 \over 4})$ we have
\[
\lim_{\eps \searrow 0} \bigl[m_{ij}^-(\lambda + i \eps) - m_{ij}^-(\lambda - i \eps)\bigr] = 0, \qquad i,j = 1,2,
\]
thus $\rho\bigl((\lambda_1, \lambda_2)\bigr) = 0$ for $-\infty < \lambda_1 < \lambda_2 <{1 \over 4}$. In addition, we have $\rho(\{{1 \over 4}\}) = 0$, because ${1 \over 4}$ is not an eigenvalue of $\mathcal{L}$ (indeed, the equation $\mathcal{L}u = {1 \over 4} u$ has no nontrivial solutions belonging to $L_2\bigl((1,\infty); dx\bigr)$ and satisfying \eqref{eq:example_mf_bc}). Thus $\rho_{11}(\cdot)$ is the only nonzero measure, and if we let $\tau = \sqrt{\lambda - {1 \over 4}}$ we deduce that the isomorphism defined by the generalized Fourier transform \eqref{eq:example_mf_isom1} is
\begin{equation} \label{eq:exam_mf_final}
\begin{gathered}
\mathcal{F}: L_2\bigl((1,\infty); dx\bigr) \to L_2\bigl( (0,\infty); \tau \sinh(\pi\tau) \bigl|\Gamma\bigl(\tfrac{1}{2} +\mu + i\tau\bigr)\bigr|^2 d\tau \bigr) \\[1pt]
\begin{aligned}
& (\mathcal{F}f)(\tau) = \int_1^\infty f(x) \, P_{-{1\over 2} + i\tau}^{-\mu}(x)\, dx \\
& (\mathcal{F}^{-1} \widetilde{f})(x) = {1 \over \pi} \int_0^\infty \widetilde{f}(\tau)\, P_{-{1\over 2} + i\tau}^{-\mu}(x)\, \tau \sinh(\pi\tau) \bigl|\Gamma\bigl(\tfrac{1}{2} +\mu + i\tau\bigr)\bigr|^2 d\tau.
\end{aligned}
\end{gathered}
\end{equation}
which is the generalized Mehler-Fock transform \eqref{eq:prel_mf_transf}, \eqref{eq:prel_mf_inverse}. Consequently, Theorem \ref{thm:fundsol_integralrep} assures that the function defined as
\begin{equation} \label{eq:example_mf_fundsol}
p_r(t,x,y) = {1 \over \pi} \int_{0}^\infty e^{-t(\tau^2 + 1/4)} P_{-{1\over 2} + i\tau}^{-\mu}(x) P_{-{1\over 2} + i\tau}^{-\mu}(y)\, \tau \sinh(\pi\tau)\, \Bigl|\Gamma\Bigl(\tfrac{1}{2} +\mu + i\tau\Bigr)\Bigr|^2 d\tau
\end{equation}
is the fundamental solution (with respect to the Lebesgue measure) of the parabolic equation ${\partial u \over \partial t} = (x^2-1){d^2u \over d x^2} + 2x {du \over d x} - {\mu^2 \over x^2-1}u$ on the domain $x \in (1, \infty)$, $t \geq 0$.
\end{example}

\begin{remark} \label{rmk:example_mf_specialcase}
In the special case $\mu = {1 \over 2}$, from the relation \eqref{eq:legendreP_reduction} and the reflection formula for the Gamma function it follows that \eqref{eq:exam_mf_final} becomes 
\begin{align*}
(\mathcal{F}f)(\tau) = {\sqrt{2} \over \sqrt{\pi}\, \tau} \int_0^\infty f(\cosh(\xi)) \sin(\tau \xi)\, \sqrt{\sinh(\xi)} d\xi \\
(\mathcal{F}^{-1}\widetilde{f})(\cosh(\xi)) = \sqrt{2 \over \pi\, \sinh(\xi)} \int_0^\infty \tau \widetilde{f}(\tau) \sin(\tau \xi)\, d\tau,
\end{align*}
an integral transform which is equivalent to the Fourier sine transform. Accordingly, \eqref{eq:example_mf_fundsol} specializes to
\[
p_r(t,\cosh(\xi),\cosh(\chi)) = {2 \over \pi} \bigl(\sinh(\xi)\bigr)^{\!-{1 \over 2}} \bigl(\sinh(\chi)\bigr)^{\!-{1 \over 2}\!} \int_0^\infty e^{-t(\tau^2 + {1 \over 4})} \sin(\tau \xi) \sin(\tau \chi)\, d\tau.
\]
In the latter expression, the integral term is, by \cite{erdelyi1954}, Eq.\ 2.6.7, equal to ${\pi^{1/2} \over 2\,t^{1/2}} \exp\bigl(-{t \over 4} - {\xi^2+\chi^2 \over 4t}\bigr) \sinh({\xi\chi \over 2t})$. This gives us a closed-form expression for the fundamental solution of the parabolic PDE ${\partial u \over \partial t} =$\linebreak $(x^2-1){d^2u \over d x^2} + 2x {du \over d x} - {1 \over 4(x^2-1)}u$ on the domain $x \in (1, \infty)$: \vspace*{-5pt}
\begin{equation} \label{eq:example_mf_closeddensity}
p_r(t, \cosh(\xi), \cosh(\chi)) = {1 \over \sqrt{\pi \tau}} \bigl(\sinh(\xi)\bigr)^{\!-{1 \over 2}} \bigl(\sinh(\chi)\bigr)^{\!-{1 \over 2}\!} \exp\biggl(-{t \over 4} - {\xi^2+\chi^2 \over 4t}\biggr) \sinh\Bigl({\xi\chi \over 2t}\Bigr).
\end{equation}
\end{remark}

\section{Yor-type integrals} \label{sec:yorintegrals}

The so-called \textit{Yor integral} \cite{yakubovich2013} is the elementary integral
\begin{equation} \label{eq:yorintegral}
\theta(t,x) = \frac{x\, e^{\pi^{2\!}/2t}}{ \sqrt {2\pi^3t}}\int_0^\infty \exp\left(- {\xi^2\over 2t}\right)\exp\left(-x\cosh \xi\right)\sinh(\xi) \sin\left({\pi \xi\over t}\right) d\xi, \qquad  t,x > 0.
\end{equation}
(For convenience we use the notation from \cite{yor1992}, pp.\ 42-43.) The importance of this integral arises from its applications to pricing problems in mathematical finance. Indeed, Yor proved \cite{yor1980} that the Hartman-Watson probability distribution, which is fundamental for the pricing of Asian options \cite{gemanyor1992} and in the context of the Hull-White stochastic volatility model (\cite{gulisashvili2012}, Sections 4.6 and 4.7), has a probability density given by ${\theta(t,x) \over I_0(x)}$, where $\theta(t,x)$ is the Yor integral \eqref{eq:yorintegral} and $I_0(x)$ is the modified Bessel function of the first kind. (To be more precise, ${\theta(t,x) \over I_0(x)}\, dt = \eta_x(dt) = p_x(t)\, dt$, where $\eta_x$ is the Hartman-Watson law with parameter $x>0$ and $p_x(\cdot)$ is its probability density function.)

In \cite{yakubovich2013} the second author proved that the Yor integral \eqref{eq:yorintegral} is closely related to the Kontorovich-Lebedev transform, as it can be equivalently written as

\begin{equation} \label{eq:yorintegral_klinv}
\theta(t,x) = {1 \over \pi^2} \int_0^\infty e^{-\tau^2 t/2} K_{i\tau}(x)\, \tau \sinh(\pi\tau) d\tau, \qquad  t,x > 0.
\end{equation}
This means that the Yor integral is the inverse Kontorovich-Lebedev transform \eqref{eq:exam_kl_final} of the function $\widetilde{f}(\tau) = {1 \over 2}e^{-\tau^2 t/2}$. (Observe that $\widetilde{f}(\tau) \in L_2\bigl((0,\infty); \tau\sinh(\pi\tau) d\tau\bigr)$ for each fixed $t>0$.) It is therefore natural to generalize the definition of the Yor integral as follows:

\begin{definition}
Assume that $e^{-t\lambda} \in L_2\bigl([0,\infty); \rho\bigr)$ for each $t > 0$. Then the \textit{generalized Yor integral for the operator $\mathcal{L}$} is the integral
\begin{equation} \label{eq:genyorintegral_def}
\vartheta(t,x) = \int_{0-}^\infty e^{-t\lambda} \sum_{i,j=1}^2 w_j(x,\lambda)\, d\rho_{ij}(\lambda), \qquad t > 0,\;\; a < x < b
\end{equation}
i.e., it is the inverse generalized Fourier transform \eqref{eq:genfourier_inv} of the function $e^{-t\lambda}$. (In general, the convergence is understood with respect to the norm of $L_2\bigl([0,\infty); \rho\bigr)$.)
\end{definition}

Up to normalization, the Yor integral corresponds to the case $\mathcal{L} = - x^2{d^2 \over d x^2} - x{d \over d x} + x^2$, $0 < x < \infty$. (Indeed, from \eqref{eq:exam_kl_final} we get $\theta(t,x) = 2\vartheta({t \over 2},x)$.)

We note that the condition $e^{-t\lambda} \in L_2\bigl([0,\infty); \rho\bigr)$ is superfluous whenever the basis $\{w_1(\cdot, \lambda), w_2(\cdot, \lambda)\}$ is defined via \eqref{eq:basissol_initialcond}. Indeed, in this case it is known (see \cite{naimark1968}, Corollary 3 in \S21.2 and the remarks preceding Theorem 3' in \S21.4) that, for each $\mu \in \mathbb{C}$ with $\mathrm{Im}\, \lambda \neq 0$, we have convergence of the integral
\[
\int_{0-}^\infty {1 \over |\lambda - \mu|^2} \sum_{i,j=1}^2 d\rho_{ij}(\lambda).
\]
Since for any small $\eps > 0$ we have
\[
\int_{0-}^\infty e^{-2t\lambda} \sum_{i,j=1}^2 d\rho_{ij}(\lambda) \leq \int_{0-}^\infty e^{-2\eps\lambda} \sum_{i,j=1}^2 d\rho_{ij}(\lambda) \leq C_\eps \int_{0-}^\infty {1 \over |\lambda - i|^2} \sum_{i,j=1}^2 d\rho_{ij}(\lambda), \qquad t \geq \eps
\]
where $C_\eps$ is a positive constant depending on $\eps$, we see that $e^{-t\lambda} \in L_2\bigl([0,\infty); \rho\bigr)$ for all $t \geq \eps$, hence for all $t > 0$. In particular, this observation shows that there exists a generalized Yor integral for any second-order differential operator $\mathcal{L}$ satisfying the assumptions of Subsection \ref{sec:spectral_genresults}.

For a general basis $\{w_1(\cdot, \lambda), w_2(\cdot, \lambda)\}$ satisfying the assumptions of Theorems \ref{thm:genfourier} and \ref{thm:rhomeasure_tk_formula}, the verification of the condition $e^{-t\lambda} \in L_2\bigl([0,\infty); \rho\bigr)$ only requires the study of the behavior of the measure $d\rho(\lambda)$ as $\lambda \to \infty$. (Recall that the measure $\rho$ is finite on bounded intervals.)

According to \cite{yakubovich2013}, Equation (3.7), the Yor integral \eqref{eq:yorintegral} is a solution of the parabolic PDE ${\partial u \over \partial t} = {x^2 \over 2} {\partial^2 u \over \partial x^2} + {x \over 2} {\partial u \over \partial x} - {x^2 \over 2} u$. By other words, the Yor integral is an inverse Kontorovich-Lebedev transform which yields a solution of the parabolic PDE associated with the corresponding Sturm-Liouville operator \eqref{eq:example_kl_Lop}. The following lemma gives a sufficient condition for extending this property to the generalized Yor integral \eqref{eq:genyorintegral_def}:

\begin{lemma} \label{lem:genyorintegral_pdesol}
Assume that the integrals
\begin{align}
\label{eq:genyorintegral_pdesol_hyp1} \int_{0-}^\infty \lambda^n e^{-t\lambda} \sum_{i,j=1}^2 w_j(x,\lambda)\, d\rho_{ij}(\lambda), \qquad n=0,1,2,\ldots \\
\label{eq:genyorintegral_pdesol_hyp2} \int_{0-}^\infty e^{-t\lambda} \sum_{i,j=1}^2 {\partial^n w_j(x,\lambda) \over \partial x^n}\, d\rho_{ij}(\lambda), \qquad n=0,1,2,\ldots
\end{align}
converge absolutely and uniformly for $t \geq t_0 > 0$ and $x$ in compact intervals of $(a,b)$. Then, for $n \in \mathbb{N}$, the generalized Yor integral is a solution of the PDE ${\partial^n u \over \partial t^n} = (-\mathcal{L}_x)^n u$ on the domain $t > 0$, $a < x < b$.
\end{lemma}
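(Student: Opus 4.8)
The plan is to verify the identity $\frac{\partial^n \vartheta}{\partial t^n} = (-\mathcal{L}_x)^n \vartheta$ by computing both sides through differentiation under the integral sign in \eqref{eq:genyorintegral_def}, and observing that they reduce to one and the same integral. The two structural facts that drive the argument are elementary: on the one hand $\frac{\partial^n}{\partial t^n} e^{-t\lambda} = (-\lambda)^n e^{-t\lambda}$; on the other hand, since each $w_j(\cdot,\lambda)$ is by hypothesis a solution of $\mathcal{L}u = \lambda u$, we have $\mathcal{L}_x w_j(x,\lambda) = \lambda\, w_j(x,\lambda)$, and iterating gives $(-\mathcal{L}_x)^n w_j(x,\lambda) = (-\lambda)^n w_j(x,\lambda)$ for every $n$.

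First I would handle the time derivatives. Formally differentiating \eqref{eq:genyorintegral_def} $n$ times in $t$ produces exactly the integrand appearing in \eqref{eq:genyorintegral_pdesol_hyp1} (up to the sign $(-1)^n$). Because hypothesis \eqref{eq:genyorintegral_pdesol_hyp1} guarantees that this integral converges absolutely and uniformly for $t \geq t_0$ and $x$ in compact subintervals, the classical theorem on differentiation under the integral sign --- applied inductively in $n$ and entrywise in the matrix measure $\rho_{ij}$ --- legitimates the interchange, yielding
\[
\frac{\partial^n \vartheta}{\partial t^n}(t,x) = \int_{0-}^\infty (-\lambda)^n e^{-t\lambda} \sum_{i,j=1}^2 w_j(x,\lambda)\, d\rho_{ij}(\lambda).
\]

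Next I would handle the spatial operator. Regarding $(-\mathcal{L}_x)^n$ as a linear differential operator of order $2n$ whose coefficients are fixed functions of $x$ (built from $p$, $q$, $r$ and their derivatives, cf.\ \eqref{eq:sturmliouv_L}, hence bounded on each compact subinterval of $(a,b)$), I would move it inside the integral. This requires differentiating \eqref{eq:genyorintegral_def} under the integral sign in $x$ up to order $2n$, which is precisely what hypothesis \eqref{eq:genyorintegral_pdesol_hyp2} supplies: it gives uniform convergence on compact sets of the integral of every $x$-derivative $\frac{\partial^k w_j}{\partial x^k}$, and multiplication by the bounded coefficient functions preserves this uniform convergence. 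Interchanging and then invoking $(-\mathcal{L}_x)^n w_j = (-\lambda)^n w_j$ gives
\[
(-\mathcal{L}_x)^n \vartheta(t,x) = \int_{0-}^\infty e^{-t\lambda} \sum_{i,j=1}^2 (-\mathcal{L}_x)^n w_j(x,\lambda)\, d\rho_{ij}(\lambda) = \int_{0-}^\infty (-\lambda)^n e^{-t\lambda} \sum_{i,j=1}^2 w_j(x,\lambda)\, d\rho_{ij}(\lambda),
\]
which coincides with the expression obtained for $\frac{\partial^n \vartheta}{\partial t^n}$. This proves the asserted PDE.

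The main obstacle is the rigorous justification of the two interchanges of differentiation and integration, and in particular the spatial one: since $(-\mathcal{L}_x)^n$ has order $2n$, one must confirm that hypothesis \eqref{eq:genyorintegral_pdesol_hyp2} controls the integrals of $x$-derivatives of all the required orders (not merely of order $n$), and that neither the intermediate derivatives nor the coefficient factors spoil the uniform convergence. A secondary point deserving care is the regularity needed to realize $(-\mathcal{L}_x)^n$ as a genuine order-$2n$ operator applied termwise to $w_j$; under the standing assumption that $q$ is only locally H\"older continuous this is delicate for $n \geq 2$, though it causes no difficulty in the smooth cases (such as the Bessel, Whittaker and Legendre operators of Subsection \ref{sec:index_examples}), where the solutions $w_j(\cdot,\lambda)$ are smooth in $x$ and the composition $(-\mathcal{L}_x)^n w_j = (-\lambda)^n w_j$ is unambiguous.
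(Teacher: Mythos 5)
Your argument is essentially the paper's own proof: differentiate under the integral sign in $t$ using hypothesis \eqref{eq:genyorintegral_pdesol_hyp1}, use the eigenfunction relation $(-\mathcal{L}_x)^n w_j = (-\lambda)^n w_j$, and pull $(-\mathcal{L}_x)^n$ outside the integral using hypothesis \eqref{eq:genyorintegral_pdesol_hyp2}. The extra care you devote to the order-$2n$ interchange and to the regularity of the coefficients is a reasonable elaboration of what the paper dispatches with ``clearly,'' but it does not change the route.
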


\begin{proof}
Clearly, the convergence assumption in the lemma allows us to interchange the integral and the derivatives; recalling that the $w_j(t,x)$ are solutions of $\mathcal{L}_x u = \lambda u$, we see that
\begin{align*}
{\partial^n \vartheta(t,x) \over \partial t^n} & = \int_{0-}^\infty (-\lambda)^n e^{-t\lambda} \sum_{i,j=1}^2 w_j(x,\lambda)\, d\rho_{ij}(\lambda) \\
& = \int_{0-}^\infty e^{-t\lambda} \sum_{i,j=1}^2 \bigl((-\mathcal{L}_x)^n w_j\bigr)(x,\lambda) \, d\rho_{ij}(\lambda) \\
& = \bigl((-\mathcal{L}_x)^n \vartheta\bigr)(t,x). \qedhere
\end{align*}
\end{proof}

The second author has proved \cite{yakubovich2013} that the Yor integral \eqref{eq:yorintegral} satisfies the evolution equation
\[
\int_0^\infty p_r(\tfrac{t}{2},x,\xi)\, \theta(s,\xi)\, {d\xi \over \xi} = \theta(t+s,x), \qquad t, s > 0,\;\: 0 < x < \infty
\]
where $p_r(t,x,y)$ is the fundamental solution \eqref{eq:exam_kl_fundsol} associated with the operator $\mathcal{L} = - x^2{d^2 \over d x^2} - x{d \over d x} + x^2$. We will now outline how this evolution equation may be extended to generalized Yor integrals. Suppose that the convergence hypothesis of Lemma \ref{lem:genyorintegral_pdesol} holds, and fix $s>0$. On the one hand, the properties (especially condition (v) of Definition \ref{def:fundsol_mckean}) of the fundamental solution \eqref{eq:spectralexp_fundsol} of the PDE ${\partial u \over \partial t} = - \mathcal{L}_x u$ assure that the integral
\[
\int_0^\infty p_r(t,x,\xi)\, \vartheta(s,\xi)\, r(\xi) d\xi, \qquad t, s > 0,\;\: a < x < b
\]
is a solution of ${\partial v \over \partial t} = -\mathcal{L}_x v$ satisfying the initial condition $v(0,x;s) = \vartheta(s,x)$. On the other hand, Lemma \ref{lem:genyorintegral_pdesol} implies that $\vartheta(t+s,x)$ is also a solution of ${\partial v \over \partial t} = -\mathcal{L}_x v$ satisfying the same initial condition. It is therefore natural to take advantage of uniqueness results (such as Theorems \ref{thm:feynmankac} and \ref{thm:feynmankac_v2} below) for the solution of the Cauchy problem in order to prove the equality between $\vartheta(t+s,x)$ and $\int_0^\infty p_r(t,x,\xi)\, \vartheta(s,\xi)\, r(\xi) d\xi$. The connection with diffusion processes is useful for verifying the uniqueness conditions, as we will illustrate in Example \ref{exam:iw_genyor_evoleq}.

\begin{example} (Generalized Yor integral for the Mehler-Fock transform) \label{exam:mf_genyor}
Consider the operator $\mathcal{L} = - (x^2-1){d^2 \over d x^2} - 2x {d \over d x} + {\mu^2 \over x^2-1}$, $1 < x < \infty$, where $0 \leq \mu < 1$ is a fixed parameter. As seen in Example \ref{exam:mf_transf}, its generalized Fourier transform is the Mehler-Fock transform. The associated generalized Yor integral is given by
\begin{equation} \label{eq:genyorintegral_mf}
\vartheta(t,x) = {1 \over \pi} \int_0^\infty e^{-t(\tau^2+{1 \over 4})}\, P_{-{1\over 2} + i\tau}^{-\mu}(x)\, \tau \sinh(\pi\tau) \bigl|\Gamma\bigl(\tfrac{1}{2} +\mu + i\tau\bigr)\bigr|^2 d\tau.
\end{equation}
and it is a solution of the PDE ${\partial^n u \over \partial t^n} = \bigl[(x^2-1){d^2 \over d x^2} + 2x {d \over d x} - {\mu^2 \over x^2-1}\bigr]^n u$ on the domain $t > 0$, $1 < x < \infty$ ($n \in \mathbb{N}$). This follows from Lemma \ref{lem:genyorintegral_pdesol} since the integrals \eqref{eq:genyorintegral_pdesol_hyp1}, \eqref{eq:genyorintegral_pdesol_hyp2} satisfy the required convergence condition. Indeed, by \cite{dlmf}, Eq.\ 14.2.4, the associated Legendre function admits the integral representation 
\begin{equation} \label{eq:legendreass_integralrep}
P_{-{1\over 2} + i\tau}^{-\mu}(x) = \biggl({2\over \pi}\biggr)^{1 \over 2} {\Gamma\bigl(\mu + \tfrac{1}{2}\bigr)\, (x^2-1)^{\mu \over 2} \over \bigl|\Gamma\bigl(\tfrac{1}{2} +\mu + i\tau\bigr)\bigr|^2} \int_0^\infty {\cos(\tau\xi) \over (x+ \cosh \xi)^{\mu+{1 \over 2}}}\, d\xi
\end{equation}
so the absolute convergence of the integral \eqref{eq:genyorintegral_pdesol_hyp1} follows from the inequalities
\begin{align*}
& \int_0^\infty (\tau^2+\tfrac{1}{4})^n\, e^{-t(\tau^2+{1 \over 4})}\, \bigl|P_{-{1\over 2} + i\tau}^{-\mu}(x)\bigr|\, \tau \sinh(\pi\tau) \bigl|\Gamma\bigl(\tfrac{1}{2} +\mu + i\tau\bigr)\bigr|^2 d\tau \\
& \leq \biggl({2\over \pi}\biggr)^{1 \over 2} \Gamma\bigl(\mu + \tfrac{1}{2}\bigr)\, (x^2-1)^{\mu \over 2} \int_0^\infty (\tau^2+\tfrac{1}{4})^n\, e^{-t(\tau^2+{1 \over 4})}\, \int_0^\infty {|\cos(\tau\xi)| \over (x+ \cosh \xi)^{\mu+{1 \over 2}}}\, d\xi \tau \sinh(\pi\tau) d\tau \\
& \leq \biggl({2\over \pi}\biggr)^{1 \over 2} \Gamma\bigl(\mu + \tfrac{1}{2}\bigr)\, (x^2-1)^{\mu \over 2} \int_0^\infty (\tau^2+\tfrac{1}{4})^n\, e^{-t(\tau^2+{1 \over 4})}\, \tau \sinh(\pi\tau) d\tau\, \int_0^\infty (1+ \cosh \xi)^{-\mu-{1 \over 2}}\, d\xi
\end{align*}
where the two integrals in the latter expression converge (uniformly for $t \geq t_0 > 0$). Regarding the integral \eqref{eq:genyorintegral_pdesol_hyp2}, notice that \eqref{eq:legendreass_integralrep} and the product rule yield
\[
{\partial^n \over \partial x^n}\Bigr[P_{-{1\over 2} + i\tau}^{-\mu}(x)\Bigl] = {1 \over \bigl|\Gamma\bigl(\tfrac{1}{2} +\mu + i\tau\bigr)\bigr|^2} \sum_{j=0}^n C_j(\mu)\, {\partial^{n-j} \over \partial x^{n-j}} \bigl[(x^2-1)^{\mu \over 2}\bigr] \int_0^\infty {\cos(\tau\xi) \over (x+ \cosh \xi)^{\mu+{1 \over 2}+j}}\, d\xi
\]
where $C_j(\mu)$ are real constants which depend on $\mu$. (The integrals converge absolutely and uniformly on $x$, so we can differentiate under the integral sign.) Hence
\begin{align*}
& \int_0^\infty e^{-t(\tau^2+{1 \over 4})}\, \biggl|{\partial^n \over \partial x^n}\Bigr[P_{-{1\over 2} + i\tau}^{-\mu}(x)\Bigl]\biggr|\, \tau \sinh(\pi\tau) \bigl|\Gamma\bigl(\tfrac{1}{2} +\mu + i\tau\bigr)\bigr|^2 d\tau \\
& \leq \sum_{j=0}^n \biggl|C_j(\mu)\, {\partial^{n-j} \over \partial x^{n-j}} \bigl[(x^2-1)^{\mu \over 2}\bigr]\biggr| \int_0^\infty e^{-t(\tau^2+{1 \over 4})}\, \int_0^\infty {|\cos(\tau\xi)| \over (x+ \cosh \xi)^{\mu+{1 \over 2}+j}}\, d\xi\, \tau \sinh(\pi\tau) d\tau \\
& \leq \int_0^\infty e^{-t(\tau^2+{1 \over 4})}\, \tau \sinh(\pi\tau) d\tau\,\sum_{j=0}^n \biggl|C_j(\mu)\, {\partial^{n-j} \over \partial x^{n-j}}\bigl[(x^2-1)^{\mu \over 2}\bigr]\biggr| \int_0^\infty (1+ \cosh \xi)^{-\mu-{1 \over 2}-j}\, d\xi
\end{align*}
and it follows that \eqref{eq:genyorintegral_pdesol_hyp2} converges absolutely and uniformly for $t \geq t_0 > 0$, $x$ in compact intervals of $(0,\infty)$.

Interestingly, just like the Yor integral \eqref{eq:yorintegral}, \eqref{eq:yorintegral_klinv}, the generalized Yor integral for the Mehler-Fock transform can be written as an integral involving elementary functions only. To prove this claim, we substitute \eqref{eq:legendreass_integralrep} into \eqref{eq:genyorintegral_mf} and apply Fubini's theorem (the absolute convergence has been proved above), obtaining
\[
\vartheta(t,x) = \biggl({2\over \pi}\biggr)^{1 \over 2} \Gamma\bigl(\mu + \tfrac{1}{2}\bigr)\, (x^2-1)^{\mu \over 2} e^{-{t \over 4}} \int_0^\infty (x+ \cosh \xi)^{-\mu-{1 \over 2}} \int_0^\infty e^{-t\tau^2} \tau \sinh(\pi\tau)\cos(\xi\tau) d\tau \, d\xi.
\]
By \cite{prudnikov1986}, Eq.\ 2.5.41.15, the inside integral equals
\[
{1 \over 2} \int_{-\infty}^\infty e^{-t\tau^2 + \pi\tau} \tau \cos(\xi\tau)\, d\tau = {1 \over 4t} \sqrt{\pi \over t} \exp\Bigl({\pi^2 - \xi^2 \over 4t}\Bigr) \biggl[\pi \cos\Bigl({\pi\xi \over 2t}\Bigr) - \xi \sin\Bigl({\pi\xi \over 2t}\Bigr)\biggr]
\]
and we conclude that
\[
\vartheta(t,x) = (2t)^{-{3 \over 2}} \Gamma\bigl(\mu + \tfrac{1}{2}\bigr)\, (x^2-1)^{\mu \over 2} \exp\Bigl({\pi^2 \over 4t}-{t \over 4}\Bigr) \int_0^\infty {\exp\bigl(-{\xi^2 \over 4t}\bigr) \over (x+ \cosh \xi)^{\mu+{1 \over 2}}} \biggl[\pi \cos\Bigl({\pi\xi \over 2t}\Bigr) - \xi \sin\Bigl({\pi\xi \over 2t}\Bigr)\biggr] \, d\xi
\]
which is a representation of \eqref{eq:genyorintegral_mf} as an integral over elementary functions.
\end{example}

\begin{example} (Generalized Yor integral for the index Whittaker transform)  \label{exam:iw_genyor}
Now consider the operator $\mathcal{L} = - x^2{d^2 \over d x^2} - x{d \over d x} + (x-\alpha)^2$, $0 < x < \infty$, where $-\infty < \alpha < {1 \over 2}$ is a fixed parameter. It was seen in Example \ref{exam:iw_transf} that its generalized Fourier transform is the index Whittaker transform. Its generalized Yor integral is
\begin{equation} \label{eq:genyorintegral_iw}
\vartheta(t,x) = {\pi^{-5/2} \over \sqrt{2x}} \int_0^\infty e^{-t(\tau^2 + \alpha^2)} W_{\alpha, i\tau}(2x)\, \tau\sinh(2\pi \tau) \Bigl|\Gamma\bigl(\tfrac{1}{2}-\alpha +i\tau\bigr)\Bigr|^2 d\tau.
\end{equation}

Again, we can use an integral representation of the special function in the integrand to prove that the integrals \eqref{eq:genyorintegral_pdesol_hyp1}, \eqref{eq:genyorintegral_pdesol_hyp2} converge absolutely and uniformly. According to \cite{dlmf}, Eq.\ 13.16.5, we have
\[
{W_{\alpha, i\tau}(2x) \over \sqrt{2x}} = {2^{-i\tau} x^{i\tau} \over \Gamma\bigl({1\over 2} - \alpha +i\tau\bigr)} \int_1^\infty e^{-x\xi} (\xi-1)^{i\tau-{1\over 2} - \alpha} (\xi+1)^{i\tau-{1\over 2} + \alpha} d\xi
\]
and therefore
\begin{equation} \label{eq:genyorintegral_est1}
\begin{aligned}
& \int_0^\infty (\tau^2 + \alpha^2)^n\, e^{-t(\tau^2 + \alpha^2)} \biggl|{W_{\alpha, i\tau}(2x) \over \sqrt{2x}}\biggr|\, \tau\sinh(2\pi \tau) \Bigl|\Gamma\bigl(\tfrac{1}{2}-\alpha +i\tau\bigr)\Bigr|^2 d\tau \\ 
& \leq \int_0^\infty (\tau^2 + \alpha^2)^n\, e^{-t(\tau^2 + \alpha^2)} \tau\sinh(2\pi \tau) \Bigl|\Gamma\bigl(\tfrac{1}{2}-\alpha +i\tau\bigr)\Bigr| d\tau\, \int_1^\infty e^{-x\xi} (\xi-1)^{-{1\over 2} - \alpha} (\xi+1)^{-{1\over 2} + \alpha} d\xi.
\end{aligned}
\end{equation}
Since (for $\alpha < {1 \over 2}$) we have $\bigl|\Gamma\bigl(\tfrac{1}{2}-\alpha +i\tau\bigr)\bigr| \leq \Gamma\bigl(\tfrac{1}{2}-\alpha \bigr) < \infty$, the integral with respect to $\tau$ converges uniformly for $t \geq t_0 > 0$; it is also clear that the integral with respect to $\xi$ converges uniformly for $x \geq x_0 > 0$. In addition, using the same reasoning as in Example \ref{exam:mf_genyor} we see that
\[
{\partial^n \over \partial x^n} \biggl[{W_{\alpha, i\tau}(2x) \over \sqrt{2x}}\biggr] = {2^{-i\tau} \over \Gamma\bigl({1\over 2} - \alpha +i\tau\bigr)} \sum_{j=0}^n \varphi_{n-j}(\tau)\, x^{i\tau-n+j} \int_1^\infty \xi^j e^{-x\xi} (\xi-1)^{i\tau-{1\over 2} - \alpha} (\xi+1)^{i\tau-{1\over 2} + \alpha} d\xi
\]
where each $\varphi_{n-j}(\tau)$ is a polynomial of degree $n-j$ in the variable $\tau$, with complex coefficients; consequently,
\begin{align*}
& \int_0^\infty e^{-t(\tau^2 + \alpha^2)} \biggl|{\partial^n \over \partial x^n} \biggl[{W_{\alpha, i\tau}(2x) \over \sqrt{2x}}\biggr]\biggr|\, \tau\sinh(2\pi \tau) \Bigl|\Gamma\bigl(\tfrac{1}{2}-\alpha +i\tau\bigr)\Bigr|^2 d\tau \\ 
& \!\leq \sum_{\!j=0\!}^n x^{-n+j}\! \int_0^\infty\!\! e^{-t(\tau^2 + \alpha^2)} |\varphi_{n-j}(\tau)|\tau\sinh(2\pi \tau) \Bigl|\Gamma\bigl(\tfrac{1}{2}-\alpha +i\tau\bigr)\Bigr| d\tau \int_1^\infty\! \xi^j e^{-x\xi} (\xi-1)^{-{1\over 2} - \alpha} (\xi+1)^{-{1\over 2} + \alpha} d\xi.
\end{align*}
All the integrals in the last expression converge uniformly for $t \geq t_0 > 0$ and $x \geq x_0 > 0$. Hence the assumption of Lemma \ref{lem:genyorintegral_pdesol} is satisfied, and this assures that the generalized Yor integral \eqref{eq:genyorintegral_iw} is a solution of the PDE ${\partial^n u \over \partial t^n} = \bigl[x^2{d^2 \over d x^2} + x{d \over d x} - (x-\alpha)^2\bigr]^n u$ on the domain $t > 0$, $0 < x < \infty$ ($n \in \mathbb{N}$).
\end{example}

\chapter{Diffusion processes} \label{chap:diffusion}

We will now introduce the diffusion processes associated with the Sturm-Liouville operators from the previous section. We shall then present some properties which relate these stochastic processes with the generalized Fourier transforms (in particular, the index transforms) and the associated Yor integrals.

Let $\mathcal{L}$ be the Sturm-Liouville differential operator \eqref{eq:sturmliouv_L}, and let $p(x)$, $q(x)$ and $r(x)$ be the functions which define $\mathcal{L}$. Throughout this section, besides the assumptions in Section \ref{chap:spectral}, we will also assume that both boundaries $a$ and $b$ are either entrance or natural.

Consider, on the state space $(a,b)$, the stochastic differential equation
\begin{equation} \label{eq:sde_general}
dX_t = \mu(X_t) dt + \sigma(X_t) dW_t
\end{equation}
where $\mu(x) = {p'(x) \over r(x)}$ and $\sigma(x) = \bigl({2p(x) \over r(x)}\bigr)^{1 \over 2}$ are deterministic functions, and $W = \{W_t\}_{t \geq 0}$ is a standard Brownian motion. (For background on the theory of diffusion processes and stochastic differential equations, we refer to Borodin and Salminen \cite{borodinsalminen2002} and references therein.) Our assumptions on $p(x)$ and $r(x)$ assure (see \cite{kunita1984}, Theorem II.5.2) that for each $x \in (a,b)$, this stochastic differential equation has (up to indistinguishability) a unique solution $\{\Xtx\}_{t \geq 0}$ such that $\Xox = x$ and $\Xtx \in (a,b)$ for all $t \geq 0$. (Since $a$ and $b$ are either entrance or natural boundaries, the explosion time is infinite, cf.\ \cite{borodinsalminen2002}, Section II.1.) The stochastic process $\{\Xtx\}_{t \geq 0}$ is a diffusion process whose infinitesimal generator is (cf.\ \cite{borodinsalminen2002}, Section III.5) the differential operator $-\mathcal{L}^{0}$ obtained from $-\mathcal{L}$ by setting $q(x) \equiv 0$, i.e.,
\[
-\mathcal{L}^0 = {1 \over r(x)} {d \over dx}\biggl(p(x) {d \over dx}\biggr).
\]
Now let $A_t = \int_0^t k(\Xsx)\, ds$, where $k(x) = {q(x) \over r(x)}$. (The function $k(x)$ is nonnegative, thus $\{A_t\}_{t \geq 0}$ is an increasing process.) Let $\mb{e}$ be an exponentially distributed random variable with unit mean and independent of $\{\Xtx\}_{t \geq 0}$, and consider the stopping time $\zeta = \inf\{t \geq 0: A_t > \mb{e}\}$. The \textit{killed process} is defined by
\[
\wXtx = \begin{cases}
\Xtx, & t < \zeta \\
\Delta, & t \geq \zeta
\end{cases}
\]
where $\Delta$ is a point outside of $\mathbb{R}$. The process $\{\wXtx\}_{t \geq 0}$ is also a diffusion process, and its infinitesimal generator is precisely the Sturm-Liouville operator $-\mathcal{L}$ (\cite{borodinsalminen2002}, Section II.4).

Let us state the Feynman-Kac theorem, which provides a stochastic representation formula for the solution of the Cauchy problem for the parabolic PDE ${\partial u \over \partial t} = -\mathcal{L}_x u$:

\begin{theorem} (Feynman-Kac theorem -- \cite{heathschweizer2000}, Section 1) \label{thm:feynmankac}
Let $\psi:(a,b) \to [0,\infty)$ and $g:[0,\infty) \times (a,b) \to [0,\infty)$ be nonnegative locally $\alpha$-H\"{o}lder continuous functions (for some $\alpha > 0$). Assume that the functions $\psi$ and $g$ are bounded. Then the unique classical solution $u \in \mathrm{C}^{1,2}\bigl((0,\infty)\times (a,b)\bigr) \cap \mathrm{C}\bigl([0,\infty) \times (a,b)\bigr)$ of the Cauchy problem
\begin{equation} \label{eq:feynmankac_cauchypb}
\begin{aligned}
{\partial u \over \partial t} + \mathcal{L}_x u =g, && \qquad (t,x) \in (0,\infty) \times (a,b)\\
u(0,x) = \psi(x), && \qquad x \in (a,b)
\end{aligned}
\end{equation}
is given by the expectation
\begin{equation} \label{eq:feynmankac_usol}
u(t,x) = \mathbb{E}\biggl[e^{-\int_0^t k(\Xsx)ds} \psi(\Xtx) + \int_0^t e^{-\int_0^s k(\Xlx)d\ell} g(t-s,\Xsx)\,ds \biggr]
\end{equation}
where $\{\Xtx\}_{t \geq 0}$ is the solution of the stochastic differential equation \eqref{eq:sde_general} such that $\Xox = x$.
\end{theorem}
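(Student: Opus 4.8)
The plan is to recognize the Cauchy problem \eqref{eq:feynmankac_cauchypb} as the canonical setting for the Feynman-Kac formula and to prove its two assertions — the stochastic representation \eqref{eq:feynmankac_usol} and uniqueness — by an Itô/martingale argument, while deducing existence and regularity from the fundamental solution already constructed in Theorem \ref{thm:fundsol_integralrep}. First I would rewrite the operator: since $-\mathcal{L} = -\mathcal{L}^0 - k$, where $k(x) = q(x)/r(x) \geq 0$ and $-\mathcal{L}^0 = \frac{1}{r(x)}\frac{d}{dx}\bigl(p(x)\frac{d}{dx}\bigr)$ is the infinitesimal generator of $\{\Xtx\}$, the equation $\partial_t u + \mathcal{L}_x u = g$ reads $\partial_t u = (-\mathcal{L}^0_x)u - k(x)\,u + g$. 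That is, the diffusion $\{\Xtx\}$ is killed at rate $k$ and driven by the source $g$, which is precisely the form to which the probabilistic representation applies.

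For uniqueness and the representation, let $u$ be any classical solution as in the statement and fix $t > 0$, $x \in (a,b)$. Writing $M_s = \exp\bigl(-\int_0^s k(\Xlx)\,d\ell\bigr)$, I would apply Itô's formula to
\[
Y_s = M_s\, u(t-s, \Xsx) + \int_0^s M_\theta\, g(t-\theta, X_\theta^x)\, d\theta, \qquad 0 \leq s \leq t.
\]
Using $dM_s = -k(\Xsx)M_s\,ds$, the expansion of $M_s\,u(t-s,\Xsx)$ produces the drift $M_s\bigl[-\partial_t u + (-\mathcal{L}^0_x)u - k\,u\bigr](t-s,\Xsx)\,ds$ together with a stochastic-integral term; by the PDE this drift equals $-M_s\, g(t-s,\Xsx)\,ds$, which is cancelled exactly by the differential of the Duhamel integral, so that $Y_s$ is a local martingale. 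Localizing along $\zeta_n = \inf\{s: \Xsx \notin [a_n,b_n]\}$ for a compact exhaustion $[a_n,b_n]\uparrow(a,b)$, taking expectations of $Y_{t\wedge\zeta_n}$ and letting $n\to\infty$ (the boundaries being entrance or natural guarantees non-explosion, hence $\zeta_n \to \infty$ a.s.), I obtain $\mathbb{E}[Y_t] = Y_0 = u(t,x)$. Since $Y_t = M_t\,\psi(\Xtx) + \int_0^t M_\theta\, g(t-\theta, X_\theta^x)\,d\theta$, this is exactly \eqref{eq:feynmankac_usol}; and as the right-hand side is independent of the chosen solution, uniqueness follows at once. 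The limit is justified by $0 < M_s \leq 1$ (because $k \geq 0$) and the boundedness of $\psi$, $g$.

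For existence I would take the candidate solution built from the fundamental solution $p_r(t,x,y)$ of Theorem \ref{thm:fundsol_integralrep}, namely
\[
u(t,x) = \int_a^b p_r(t,x,y)\,\psi(y)\, r(y)\,dy + \int_0^t \int_a^b p_r(s,x,y)\, g(t-s,y)\, r(y)\,dy\,ds.
\]
Properties (ii), (v) and (vi) of Definition \ref{def:fundsol_mckean} show that the first term solves $\partial_t v = -\mathcal{L}_x v$, and Duhamel's principle then yields that $u$ solves \eqref{eq:feynmankac_cauchypb}; the semigroup $(S_t)$ acting as an approximate identity on $\mathrm{C}_\mathrm{b}(a,b)$ gives the matching of the initial datum $u(0,\cdot) = \psi$. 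Because $p_r(t,x,\cdot)\,r(\cdot)$ is the sub-probability transition density of the killed process $\{\wXtx\}$, one has $\int_a^b p_r(t,x,y)\,\psi(y)\,r(y)\,dy = \mathbb{E}\bigl[M_t\,\psi(\Xtx)\bigr]$, reconciling this construction with the representation derived above.

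The main obstacle is the regularity of the candidate solution: checking that $u \in \mathrm{C}^{1,2}\bigl((0,\infty)\times(a,b)\bigr) \cap \mathrm{C}\bigl([0,\infty)\times(a,b)\bigr)$ and, in particular, that it attains its initial data continuously. This is where the hypotheses are used — the local $\alpha$-Hölder continuity of $q$ (hence of $k$), together with the Hölder continuity and boundedness of $\psi$ and $g$, are exactly the conditions needed to invoke interior parabolic Schauder estimates for the smoothness of $u$ on $(0,\infty)\times(a,b)$, while continuity up to $t=0$ follows from the approximate-identity property of the kernel. The remaining delicate point — ensuring that the local martingale $Y_s$ is genuinely uniformly integrable on $[0,t]$ near the boundaries, so that the limit $\mathbb{E}[Y_{t\wedge\zeta_n}]\to\mathbb{E}[Y_t]$ is valid — is handled by the localization above, relying crucially on the standing assumption that both $a$ and $b$ are entrance or natural.
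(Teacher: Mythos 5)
The paper does not prove this theorem at all: it is quoted verbatim as a known result from Heath--Schweizer \cite{heathschweizer2000}, so there is no internal proof to compare against. Your It\^{o}/localization argument is the standard route and is essentially the one used in that reference; the decomposition $-\mathcal{L}=-\mathcal{L}^0-k$, the identification of the drift of $M_s\,u(t-s,\Xsx)$ with $-M_s\,g(t-s,\Xsx)\,ds$ via the PDE, and the cancellation against the Duhamel term are all correct, as is the observation that the entrance/natural boundary classification gives non-explosion.

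There is, however, a genuine gap at the final limit $\mathbb{E}[Y_{t\wedge\zeta_n}]\to\mathbb{E}[Y_t]$. On the event $\{\zeta_n<t\}$ the stopped variable is $M_{\zeta_n}\,u(t-\zeta_n,X^{x}_{\zeta_n})+\int_0^{\zeta_n}M_\theta\,g\,d\theta$, and the first term involves the \emph{unknown} solution $u$ evaluated at points $X^{x}_{\zeta_n}$ approaching the boundary. Boundedness of $\psi$ and $g$ controls $Y_t$ but says nothing about $u$ near $a$ and $b$, so dominated convergence does not apply as you claim; you would need $\mathbb{E}\bigl[|u(t-\zeta_n,X^{x}_{\zeta_n})|\,\mathds{1}_{\{\zeta_n<t\}}\bigr]\to 0$, i.e.\ a priori boundedness of $u$ or uniform integrability of $\{u(t-s\wedge\zeta_n,X^x_{s\wedge\zeta_n})\}_n$. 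This is precisely the extra hypothesis imposed in \cite{heathschweizer2000} (their uniform integrability condition) and, in the whole-line version, the polynomial growth condition \eqref{eq:feynmankac_polygrowth2} of Theorem \ref{thm:feynmankac_v2}; without some such restriction, uniqueness in the unrestricted class of classical solutions is false in general (Tychonoff-type counterexamples for the heat equation). So your argument proves the representation and uniqueness only within a class of solutions satisfying such a condition, not for every classical solution as literally stated. A second, smaller point: $u$ is only $\mathrm{C}^{1,2}$ on the open set $(0,\infty)\times(a,b)$, so It\^{o}'s formula should be applied on $[0,t-\eps]$ and the endpoint recovered by continuity of $u$ up to $t=0$; and the Schauder-estimate step in your existence argument needs H\"{o}lder continuity of $g$ jointly in $(t,x)$ to differentiate the Duhamel integral, which is worth stating explicitly.
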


If the operator $\mathcal{L}$ has domain $(a,b) = \mathbb{R}$, we also have the following version of the Feynman-Kac theorem where the nonnegativity and boundedness assumptions on $\psi$ and $g$ are relaxed:

\begin{theorem} \label{thm:feynmankac_v2} (Feynman-Kac theorem -- \cite{karatzasshreve1991}, Theorem 5.7.6 and Problem 5.7.7)
Assume that the Sturm-Liouville operator \eqref{eq:sturmliouv_L} has domain $\mathbb{R}$. For fixed $T>0$, let $\psi:\mathbb{R} \to \mathbb{R}$ and $g:[0,T] \times \mathbb{R} \to \mathbb{R}$ be continuous functions, for which there exist constants $C > 0$, $k > 1$ such that
\begin{equation} \label{eq:feynmankac_polygrowth1}
|\psi(x)| \leq C (1+|x|^k)\, \text{ for all }\, x \in \mathbb{R}, \qquad\; |g(t,x)| \leq C (1+|x|^k)\, \text{ for all } x \in \mathbb{R},\, t \in [0,T].
\end{equation}
Suppose that the functions $\mu(x)$ and $\sigma(x)$ in \eqref{eq:sde_general} satisfy the growth condition $|\mu(x)|^2 + |\sigma(x)|^2 \leq C (1+x^2)$. Assume that $v: [0,T] \times \mathbb{R} \to \mathbb{R}$ belongs to $\mathrm{C}^{1,2}\bigl((0,T]\times \mathbb{R}\bigr) \cap \mathrm{C}\bigl([0,T] \times \mathbb{R}\bigr)$, solves the Cauchy problem \eqref{eq:feynmankac_cauchypb} and grows polynomially in $x$, i.e.,
\begin{equation} \label{eq:feynmankac_polygrowth2}
\max_{t \in [0,T]} |v(t,x)| \leq M (1+|x|^\ell) \;\; \text{ for some } M > 0,\, \ell \geq 1.
\end{equation}
Then $v(t,x) = u(t,x)$, where $u(t,x)$ is defined in \eqref{eq:feynmankac_usol}; in particular, such a solution is unique.

Moreover, when $\mu(x)$ and $\sigma(x)$ are uniformly bounded on $\mathbb{R}$ we can replace \eqref{eq:feynmankac_polygrowth1}, \eqref{eq:feynmankac_polygrowth2} by exponential growth conditions
\[
|\psi(x)| \leq C e^{\nu |x|^{2-\delta}}, \quad\; |g(t,x)| \leq C e^{\nu |x|^{2-\delta}}, \quad\; \max_{t \in [0,T]} |v(t,x)| \leq M e^{\kappa |x|^{2-\eps}} \quad (C, M, \nu, \kappa,  \delta, \eps > 0).
\]
\end{theorem}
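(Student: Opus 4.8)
The plan is to reduce the identity $v = u$ to an application of Itô's formula followed by a localization argument, with the growth hypotheses serving precisely to justify the limiting procedure. Fix $T > 0$ and $(t,x) \in (0,T] \times \mathbb{R}$, and recall that the infinitesimal generator of the (unkilled) diffusion $\{\Xsx\}$ is $-\mathcal{L}^0 = \tfrac{1}{2}\sigma^2 \tfrac{d^2}{dx^2} + \mu \tfrac{d}{dx}$, while $-\mathcal{L} = -\mathcal{L}^0 - k$ with $k = q/r$. Writing $Y_s = \exp\bigl(-\int_0^s k(\Xlx)\, d\ell\bigr)$ and applying Itô's formula to $s \mapsto Y_s\, v(t-s, \Xsx)$ on $[0,t)$, the drift terms combine, by means of the PDE $\partial_t v = -\mathcal{L}_x v + g$ (which is \eqref{eq:feynmankac_cauchypb} rewritten), into exactly $-Y_s\, g(t-s, \Xsx)\, ds$, the $dW_s$-term being $Y_s\, \sigma\, \partial_y v\, dW_s$. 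Hence
\[
N_s := Y_s\, v(t-s, \Xsx) + \int_0^s Y_r\, g(t-r, X_r^x)\, dr, \qquad 0 \leq s < t,
\]
is a continuous local martingale, with $N_0 = v(t,x)$ and, by path-continuity and the continuity of $v$, $\lim_{s \nearrow t} N_s = Y_t\, \psi(\Xtx) + \int_0^t Y_r\, g(t-r, X_r^x)\, dr$, whose expectation is the right-hand side $u(t,x)$ of \eqref{eq:feynmankac_usol}.

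The entire content of the theorem is therefore the justification of $\mathbb{E}[N_t] = N_0$, i.e.\ the promotion of $N_s$ from a local martingale to a process for which optional stopping holds up to the terminal time $t$. I would localize with the stopping times $\tau_n = \inf\{s \geq 0 : |\Xsx| \geq n\} \wedge (t - \tfrac{1}{n})$, giving $\mathbb{E}[N_{\tau_n}] = N_0$ for every $n$, and then let $n \to \infty$ by dominated convergence. The dominating bound comes from the hypotheses: the polynomial growth \eqref{eq:feynmankac_polygrowth2} of $v$ and \eqref{eq:feynmankac_polygrowth1} of $g$, together with $0 \leq Y_s \leq 1$, reduce everything to the finiteness of $\mathbb{E}\bigl[\sup_{0 \leq s \leq t}|\Xsx|^m\bigr]$ for each $m$, which is a standard consequence of the linear-growth condition $|\mu(x)|^2 + |\sigma(x)|^2 \leq C(1 + x^2)$ via the Burkholder--Davis--Gundy and Gronwall inequalities (cf.\ \cite{karatzasshreve1991}, \S5.2--5.3).

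The main obstacle is exactly this uniform-integrability step: one must ensure that $\{N_{\tau_n}\}_n$ is uniformly integrable so that $\mathbb{E}[N_{\tau_n}] \to \mathbb{E}[N_t]$. The polynomial exponents $k, \ell$ are harmless because all moments of $\sup_s |\Xsx|$ are finite; the real delicacy lies near the terminal time, where $v$ is merely continuous rather than of class $\mathrm{C}^{1,2}$. This is precisely why the localization truncates at $t - \tfrac{1}{n}$ (so that Itô's formula is applied only where $v$ is smooth) and why one must then invoke the joint continuity of $v$ on $[0,T]\times\mathbb{R}$ to pass the limit $s \nearrow t$ inside the expectation.

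For the second assertion (uniformly bounded $\mu, \sigma$ and exponential growth), the same argument applies verbatim once the polynomial moment bound is replaced by an exponential one: when $\sigma$ is bounded the increments of $\Xsx$ are sub-Gaussian, so $\mathbb{E}\bigl[\exp\bigl(\lambda \sup_{0 \leq s \leq t}|\Xsx|^{2-\delta}\bigr)\bigr] < \infty$ for every $\lambda > 0$ and every $\delta > 0$. Here the exponent $2 - \delta$, strictly below $2$, is essential, since it stays within the Gaussian integrability threshold; consequently the exponential-growth hypotheses on $\psi$, $g$ and $v$ again produce an integrable dominating function, and the localization limit goes through as before.
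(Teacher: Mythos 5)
Your argument is correct and is essentially the proof of the cited result (Karatzas--Shreve, Theorem 5.7.6): the paper itself offers no proof beyond that citation, and your It\^o-plus-localization computation, with the stopping times $\tau_n = \inf\{s : |\Xsx| \geq n\} \wedge (t - \tfrac{1}{n})$ handling both the lack of smoothness of $v$ at the terminal time and the unboundedness of the coefficients, together with the moment bounds $\mathbb{E}\bigl[\sup_{s \leq t}|\Xsx|^m\bigr] < \infty$ (resp.\ sub-Gaussian bounds in the bounded-coefficient case) to justify dominated convergence, is exactly the standard route. No gaps worth flagging.
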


Taking into account the properties (v) and (vi) (in Definition \ref{def:fundsol_mckean}) of the fundamental solution \eqref{eq:spectralexp_fundsol} of the parabolic PDE ${\partial u \over \partial t} = -\mathcal{L}_x u$, a consequence of Theorem \ref{thm:feynmankac} is that for each bounded, nonnegative, locally H\"{o}lder continuous function $\psi: (a,b) \to [0,\infty)$ we have
\begin{equation} \label{eq:feynmankac_conseq}
\int_a^b \psi(y)\, p_r(t,x,y)r(y)dy = \mathbb{E}\Bigl[e^{-\int_0^t k(\Xsx)ds} \psi(\Xtx)\Bigr] = \mathbb{E}\bigl[\psi(\wXtx)\bigr]
\end{equation}
where the last equality is a consequence of the definition of the killed process $\{\wXtx\}_{t \geq 0}$. From the arbitrariness of the function $\psi$ it follows that the fundamental solution $p_r(t,x,y)$ is the transition probability density of the diffusion process $\{\wXtx\}_{t \geq 0}$ with respect to the measure $r(x) dx$.

To illustrate how this leads to interesting applications of index transforms in the characterization of certain diffusion processes, let us consider the additive process $A_t = \int_0^t k(\Xsx) ds$ given $\Xtx = y$. For $\psi: (a,b) \to [0,\infty)$ as given in the Feynman-Kac theorem, we have
\begin{align*}
\mathbb{E}\Bigl[e^{-A_t} \psi(\Xtx)\Bigr] & = \int_a^b \int_0^\infty \psi(y)\, e^{-\xi} P\bigl[A_t \in d\xi, \Xtx \in dy\bigr] \\
& = \int_a^b \psi(y) \int_0^\infty e^{-\xi} P\bigl[A_t \in d\xi \bigm| \Xtx = y\bigr]\, P\bigl[\Xtx \in dy\bigr] \\
& = \int_a^b \psi(y)\, \mathbb{E}\bigl[e^{-A_t} \bigm| \Xtx = y\bigr] \, p_r^0(t,x,y) r(y) dy
\end{align*}
where $p_r^0(t,x,y)$ denotes the fundamental solution of the parabolic PDE ${\partial u \over \partial t} = -\mathcal{L}^0_x u$. Comparing with the left hand side of \eqref{eq:feynmankac_conseq} we conclude that
\begin{equation} \label{eq:fundsol_fk_rel}
p_r(t,x,y) = \mathbb{E}\bigl[e^{-A_t} \bigm| \Xtx = y\bigr] \, p_r^0(t,x,y)
\end{equation}
and employing the representation \eqref{eq:spectralexp_fundsol} we reach an explicit expression for the conditional expectation of $e^{-A_t}$:
\begin{equation} \label{eq:fundsol_fk_relexplicit}
\mathbb{E}\bigl[e^{-A_t} \bigm| \Xtx = y\bigr] = {\int_{0-}^\infty e^{-t\lambda} \sum_{i,j=1}^2 w_i(x,\lambda) w_j(y,\lambda)\, d\rho_{ij}(\lambda) \over \int_{0-}^\infty e^{-t\lambda} \sum_{i,j=1}^2 w_i^0(x,\lambda) w_j^0(y,\lambda)\, d\rho_{ij}^0(\lambda)}
\end{equation}
where the basis $\{w_1^0(\cdot, \lambda), w_2^0(\cdot, \lambda)\}$ and the matrix measure $\rho^0$ are defined via the operator $\mathcal{L}^0$.

Actually, the relation \eqref{eq:fundsol_fk_rel} yields the following monotonicity property for the fundamental solutions:

\begin{proposition} \label{prop:fundsolineq}
For $j = 1,2$, consider the Sturm-Liouville operator $\mathcal{L}_j = {1 \over r(x)} \bigl[ -{d \over dx}\bigl(p(x) {d \over dx}\bigr) + q_j(x) \bigr]$, $x \in (a,b)$, where the coefficient functions $p(x)$, $r(x)$, $q_1(x)$ and $q_2(x)$ satisfy the assumptions of Subsection \ref{sec:spectral_genresults}. Let $p_r^j(t,x,y)$ be the fundamental solution for the parabolic equation ${\partial u \over \partial t} = -(\mathcal{L}_j)_x u$, as defined in Definition \ref{def:fundsol_mckean}. Assume that $q_1(x) \leq q_2(x)$ for all $x \in (a,b)$. Then
\[
p_r^1(t,x,y) \geq p_r^2(t,x,y) \qquad\;\; \text{for all}\;\; t > 0,\; x, y \in (a,b).
\]
\end{proposition}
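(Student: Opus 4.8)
The plan is to read the inequality off the Feynman--Kac representation of the semigroups, reducing it to a pathwise monotonicity of an additive functional. Since $\mathcal{L}_1$ and $\mathcal{L}_2$ are built from the \emph{same} coefficients $p(x)$ and $r(x)$, they give rise to the same operator $\mathcal{L}^0$, the same diffusion $\{\Xtx\}_{t\geq 0}$ solving \eqref{eq:sde_general}, and the same zero-potential fundamental solution $p_r^0(t,x,y)$. The only difference between the two settings is the killing rate $k_j(x)=q_j(x)/r(x)$, which enters through the additive functional $A_t^j=\int_0^t k_j(\Xsx)\,ds$.

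First I would apply \eqref{eq:feynmankac_conseq} to each operator separately: for every bounded, nonnegative, locally H\"older continuous $\psi:(a,b)\to[0,\infty)$ and every $t>0$, $x\in(a,b)$,
\[
\int_a^b \psi(y)\, p_r^j(t,x,y)\, r(y)\,dy \;=\; \mathbb{E}\Bigl[e^{-A_t^j}\,\psi(\Xtx)\Bigr], \qquad j=1,2.
\]
Since $q_1\leq q_2$ and $r>0$, we have $k_1\leq k_2$ everywhere, so along \emph{every} trajectory $A_t^1\leq A_t^2$, and hence $e^{-A_t^1}\geq e^{-A_t^2}$ pointwise on the probability space. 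Multiplying by the nonnegative random variable $\psi(\Xtx)$ and taking expectations gives
\[
\int_a^b \psi(y)\,\bigl[p_r^1(t,x,y)-p_r^2(t,x,y)\bigr]\, r(y)\,dy \;\geq\; 0
\]
for every such $\psi$.

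To pass from this integrated inequality to the pointwise statement, I would use that each $p_r^j(t,\cdot,\cdot)$ is continuous on $(a,b)\times(a,b)$ (it is given by the spectral integral \eqref{eq:spectralexp_fundsol}) and that $r>0$. Fixing $t,x$ and a target point $y_0$, and letting $\psi$ run through an approximate identity concentrating at $y_0$, the displayed inequality forces $p_r^1(t,x,y_0)\geq p_r^2(t,x,y_0)$; as $t,x,y_0$ were arbitrary, this is precisely the assertion. The whole argument is elementary once the representation is in hand, because the monotonicity is literally pathwise.

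The main obstacle I anticipate concerns the hypotheses rather than the computation: the identity \eqref{eq:feynmankac_conseq} was established in Section \ref{chap:diffusion} under the additional assumption that both boundaries are entrance or natural, whereas Proposition \ref{prop:fundsolineq} is stated only under the conditions of Subsection \ref{sec:spectral_genresults} (which also permit regular and exit endpoints). Reconciling the two would require either restricting to entrance/natural boundaries or verifying that the killed-diffusion representation of the semigroups $S_t^j$ persists, with the appropriate boundary behaviour, in the regular and exit cases. An alternative that sidesteps the probabilistic machinery is a parabolic comparison argument for the difference $v=p_r^1-p_r^2$, exploiting $q_1\leq q_2$ together with the positivity of $p_r^2$ and a maximum principle; but establishing such a principle in this weighted, possibly singular setting is itself the delicate point.
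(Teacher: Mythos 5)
Your proof is correct and follows essentially the same route as the paper's: both rest on the Feynman--Kac representation together with the pathwise inequality $e^{-A_t^1}\geq e^{-A_t^2}$, the only difference being that the paper localizes in $y$ by conditioning on $\Xtx=y$ (via \eqref{eq:fundsol_fk_rel} and the positivity of the density $p_r^0(t,x,y)r(y)$) whereas you integrate against nonnegative test functions and concentrate them at $y_0$, with the same final appeal to the continuity of the fundamental solutions in $y$. The boundary-type concern you raise is moot in context: the proposition is stated in Section \ref{chap:diffusion}, whose standing assumption is that both endpoints are entrance or natural, so \eqref{eq:feynmankac_conseq} applies as stated.
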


The intuition behind the following proof is, under the probabilistic viewpoint, quite straightforward: if we denote by $\{\widetilde{X}_{t}^{j,x\smash[t]{\mathstrut}}\}$ the process with infinitesimal generator $\mathcal{L}_j$, the inequality $q_1(x) \leq q_2(x)$ means that the process $\{\widetilde{X}_{t}^{2,x\smash[t]{\mathstrut}}\}$ is killed at a faster rate than the process $\{\widetilde{X}_{t}^{1,x\smash[t]{\mathstrut}}\}$.

\begin{proof}
By \eqref{eq:fundsol_fk_rel}, we have
\begin{equation} \label{eq:fundsolineq_proof1}
{p_r^2(t,x,y) \over p_r^1(t,x,y)} = {\mathbb{E}\bigl[e^{-A_t^2} \bigm| \Xtx = y\bigr] \over \mathbb{E}\bigl[e^{-A_t^1} \bigm| \Xtx = y\bigr]}
\end{equation}
where (for $j=1,2$) $A_t^j = \int_0^t k_j(\Xsx) ds$ and $k_j(x) = {q_j(x) \over r(x)}$.

Fix $t>0$ and $x \in (a,b)$. The assumption $q_1(x) \leq q_2(x)$ implies that almost surely (a.s.) $e^{-A_t^2} \leq e^{-A_t^1}$, and by the properties of conditional expectation (see \cite{shiryaev1996}, Section II.7) it follows that
\[
\mathbb{E}\bigl[e^{-A_t^2} \bigm| \Xtx = y\bigr] \leq \mathbb{E}\bigl[e^{-A_t^1} \bigm| \Xtx = y\bigr] \qquad\;\; \text{a.s.\ with respect to the measure } P[\Xtx \in dy].
\]
We know that $P[\Xtx \in dy] = p_r^0(t,x,y) r(y) dy$ where $p_r^0(t,x,y) r(y)$ is a positive and real-valued function of $y \in (a,b)$. Thus (as a consequence of the Radon-Nikodym theorem, cf.\ \cite{cohn2013}, Exercise 4.2.9) the preceding inequality holds Lebesgue almost everywhere; by \eqref{eq:fundsolineq_proof1}, this means that
\[
{p_r^2(t,x,y) \over p_r^1(t,x,y)} \leq 1 \qquad \text{for almost all } y.
\]
But, according to the properties of the fundamental solution, the left hand side is a continuous function of $y$. The result follows.
\end{proof}

We now look more closely at the diffusion processes generated by the Sturm-Liouville operators which define the index transforms.

\begin{example} (Diffusion processes associated with the Mehler-Fock transform)
Let $\mathcal{L}$ be the operator defined in \eqref{eq:example_mf_Lop}. Then the coefficients of the stochastic differential equation are $\mu(x) = 2x$ and $\sigma(x) = \sqrt{2(x^2-1)}$, and \eqref{eq:sde_general} becomes
\[
dX_t = 2X_t\, dt + \sqrt{2(X_t^2 - 1)}\, dW_t.
\]
Notice that if $\{\Xtx\}$ is the solution of this stochastic differential equation starting at $\Xox = x \in (1,\infty)$, then its infinitesimal generator is the operator $-\mathcal{L}^0 = (x^2-1){d^2 \over d x^2} + 2x {d \over d x}$, which is the operator whose spectral expansion yields the ordinary Mehler-Fock transform.

Take $0 \leq \mu_1 \leq \mu_2 < 1$ and write $q_\mu(x) = {\mu^2 \over x^2 - 1}$. Then $q_{\mu_1}(x) \leq q_{\mu_2}(x)$ for all $x \in (1, \infty)$, and from Proposition \ref{prop:fundsolineq} we can conclude that $p_r^{\mu_1} (t,x,y) \geq p_r^{\mu_2}(t,x,y)$, where $p_r^{\mu_j} (t,x,y)$ is given by the right hand side of \eqref{eq:example_mf_fundsol} with $\mu = \mu_j$. In other words, the fundamental solution (i.e., transition density of the killed process)
\[
p_r^{\mu}(t,x,y) = {1 \over \pi} \int_{0}^\infty e^{-t(\tau^2 + 1/4)} P_{-{1\over 2} + i\tau}^{-\mu}(x) P_{-{1\over 2} + i\tau}^{-\mu}(y)\, \tau \sinh(\pi\tau)\, \Bigl|\Gamma\Bigl(\tfrac{1}{2} +\mu + i\tau\Bigr)\Bigr|^2 d\tau
\]
is a decreasing function of $0 \leq \mu < 1$ (for fixed $t>0$, $x,y \in (1,\infty)$).

It is also interesting to note that, according to Remark \ref{rmk:example_mf_specialcase}, the process $\{\wXtx\}$ corresponding to $\mu = {1 \over 2}$, which is the process $\{\Xtx\}$ killed at time $\zeta = \inf\{t \geq 0:\, {1 \over 4}\! \int_0^t {1 \over (\Xsx)^2-1} ds > \mb{e}  \}$, has a transition density which is given in closed form by \eqref{eq:example_mf_closeddensity}. Note that to obtain this result we did not rely on any explicit expression for the process $\{\Xtx\}$ in terms of the underlying Brownian motion; we only relied on the PDE which is satisfied by the transition density.
\end{example}

\begin{example} (Diffusion processes associated with the Kontorovich-Lebedev and index Whittaker transforms)
When $\mathcal{L}$ is the operator \eqref{eq:example_iw_Lop}, we have $\sigma(x) = \sqrt{2}\, x$ and $\mu(x) = x$. The diffusion process with infinitesimal generator $-\mathcal{L}^0$ is then the solution of $dX_t = X_t dt + \sqrt{2}\, X_t dW_t$. This is the stochastic differential equation which defines the geometric Brownian motion, so we have
\begin{equation} \label{eq:diff_iw_gbm}
\Xtx = x \exp(\sqrt{2}\,W_t)
\end{equation}
and the well-known expression for the transition density of geometric Brownian motion (\cite{borodinsalminen2002}, Eq.\ 9.1.0.6) gives
\[
p_r^0(t,x,y) = {1 \over 2\sqrt{\pi t}} \exp\Bigl(-{1 \over 4t}(\log y - \log x)^2\Bigr).
\]

In the case $\alpha = 0$ (where $\mathcal{L}$ yields the Kontorovich-Lebedev transform), the additive process in the definition of the killing time is given by $A_t = x^2 \int_0^t \exp(2\sqrt{2}\, W_s)\,ds$. The integral $\int_0^t \exp(2\sqrt{2}\, W_s)\,ds$ (and the more general integral $\int_0^t \exp(a\sqrt{2}\, W_s)\,ds$, with $a$ a real constant, which can be reduced to the former) belongs to a family of exponential functionals of Brownian motion which has been extensively studied \cite{yor1992} due to its essential role in the Asian option pricing problem. Using the relation \eqref{eq:fundsol_fk_relexplicit} and recalling the integral representation \eqref{eq:exam_kl_fundsol} for the transition density of the process with infinitesimal generator $\mathcal{L}$, we obtain a closed-form expression for the conditional Laplace transform of this exponential functional:
\begin{equation} \label{eq:diff_kl_laplace_cond}
\mathbb{E}\Bigl[e^{-x^2 \int_0^t \exp(2\sqrt{2}\, W_s)\,ds} \Bigm| \Xtx = y\Bigr] = {4\sqrt{t}\, y \over \pi^{3/2}} \exp\Bigl({1 \over 4t}(\log y - \log x)^2\Bigr) \! \int_{0}^\infty e^{-t\tau^2} K_{i\tau}(x) K_{i\tau}(y)\, \tau\sinh(\pi\tau)\, d\tau.
\end{equation}
In addition, by \eqref{eq:feynmankac_conseq} (with $\psi(x) \equiv 1$) the unconditional Laplace transform is given by
\begin{equation} \label{eq:diff_kl_laplace_uncond1}
\mathbb{E}\Bigl[e^{-x^2 \int_0^t \exp(2\sqrt{2}\, W_s)\,ds} \Bigr] = {2 \over \pi^2} \int_0^\infty \int_{0}^\infty e^{-t\tau^2} K_{i\tau}(x) K_{i\tau}(y)\, \tau\sinh(\pi\tau)\, d\tau {dy \over y}
\end{equation}
But due to the properties of Brownian motion we also have that the Laplace transform of this functional of geometric Brownian motion can be explicitly written through the elementary integral 
\begin{equation} \label{eq:diff_kl_laplace_uncond2}
\mathbb{E}\Bigl[e^{-x^2 \int_0^t \exp(2\sqrt{2}\, W_s)\,ds} \Bigr] = \mathbb{E}\Bigl[e^{-{x^2 \over 2} \int_0^{2t} \exp(2W_s)\,ds} \Bigr] = {1 \over 2\sqrt{\pi t}} \int_{-\infty}^\infty e^{ix \sinh(y)} e^{-{y^2 \over 4t}} dy
\end{equation}
(the first equality is due to the scaling property of Brownian motion; the second equality is a consequence of Bougerol's identity for Brownian motion, cf.\ \cite{gulisashvili2012}, Section 4.2 and Eq.\ (4.8)). Combining \eqref{eq:diff_kl_laplace_uncond1} and \eqref{eq:diff_kl_laplace_uncond2} we obtain the following integral identity involving the modified Bessel function $K_{i\tau}(x)$:
\[
{2 \over \pi^2} \int_0^\infty \int_{0}^\infty e^{-t\tau^2} K_{i\tau}(x) K_{i\tau}(y)\, \tau\sinh(\pi\tau)\, d\tau {dy \over y} = {1 \over 2\sqrt{\pi t}} \int_{-\infty}^\infty e^{ix \sinh(y)} e^{-{y^2 \over 4t}} dy.
\]

In the more general case $\alpha \leq 0$ it is possible to derive, by following the same reasoning which led us to \eqref{eq:diff_kl_laplace_cond}, a closed-form expression for the conditional expectation $\mathbb{E}\bigl[e^{-x^2 \int_0^t (\exp(\sqrt{2}\, W_s) + c)^2\,ds} \bigm| \Xtx = y\bigr]$, being $c$ a nonnegative constant. Furthermore, Proposition \ref{prop:fundsolineq} asserts that the fundamental solution
\begin{equation} \label{eq:diff_iw_fundsol}
p_r^\alpha(t,x,y) = {2 \over \pi^2 \sqrt{xy}} \int_{0}^\infty e^{-t(\tau^2+\alpha^2)} W_{\alpha, i\tau}(2x) W_{\alpha, i\tau}(2y)\,\tau\sinh(2\pi \tau) \Bigl|\Gamma\bigl(\tfrac{1}{2}-\alpha +i\tau\bigr)\Bigr|^2 d\tau
\end{equation}
is an increasing function of $\alpha \leq 0$ (for fixed $t > 0$, $x, y \in (0,\infty)$).
\end{example}

We conclude this section with an example which demonstrates that, as discussed in Subsection \ref{sec:yorintegrals}, the connection with diffusion processes is useful for deriving an evolution equation for generalized Yor integrals.

\begin{example} \label{exam:iw_genyor_evoleq}
Consider the generalized Yor integral for the index Whittaker transform with parameter $\alpha < {1 \over 2}$, defined by \eqref{eq:genyorintegral_iw}. In order to show that this generalized Yor integral obeys the evolution equation, we start by estimating $\vartheta(t,x)$: using the inequality $|W_{\alpha, i\tau}(2y)| \leq C\, \bigl|\Gamma({1 \over 2} - \alpha + i\tau)\bigr|^{-2} (2y)^\alpha e^{-y}$ (see \cite{srivastava1998}, Eq.\ (1.15)) we deduce that
\[
|\vartheta(t,x)| \leq C \pi^{-5/2} (2x)^{\alpha - {1 \over 2}} e^{-x} \int_0^\infty e^{-t(\tau^2 + \alpha^2)} \, \tau\sinh(2\pi \tau) d\tau.
\]
We know from Example \ref{exam:iw_genyor} that $\vartheta(t,x)$ is a solution of ${\partial u \over \partial t} = x^2{d^2u \over d x^2} + x{du \over d x} - (x-\alpha)^2 u$ on the domain $t > 0$, $0 < x < \infty$. It easily follows that $\vartheta(t,e^y)$ is a solution of ${\partial v \over \partial t} = {\partial^2 v \over \partial y^2} - (e^y - \alpha)^2 v$ on the domain $t > 0$, $y \in \mathbb{R}$. For fixed $s, T > 0$, the above estimate gives
\[
\max_{t \in [s,\infty)} |\vartheta(t+s,e^y)| \leq M e^{\kappa|y|}
\]
for constants $M, \kappa > 0$ which may depend on $s$. Hence Theorem \ref{thm:feynmankac_v2} (where we take $\mathcal{L} = -{\partial^2 \over \partial y^2} + (e^y - \alpha)^2$, $\psi(y) = \vartheta(s,e^y)$, $g(t,y) = 0$ and $v(t,y) = \vartheta(t+s,e^y)$) implies that
\[
\vartheta(t+s,e^y) = \mathbb{E}\Bigl[e^{-\int_0^t (\exp(\sqrt{2}\, W_s + y) - \alpha)^2 ds}\, \vartheta\bigl(s,\exp(\sqrt{2}\, W_s +y)\bigr)\Bigr]
\]
which, by \eqref{eq:feynmankac_conseq}, \eqref{eq:diff_iw_gbm}, is equivalent to the evolution equation
\[
\vartheta(t+s,x) = \int_0^\infty p_r^\alpha(t,x,\xi)\, \vartheta(s,\xi)\, {d\xi \over \xi}.
\]
where $p_r^\alpha(t,x,\xi)$ is the transition density \eqref{eq:diff_iw_fundsol}.
\end{example}

\chapter*{Acknowledgments}

We thank M.\ Guerra for helpful discussions. The work of both authors was partially supported by CMUP (UID/MAT/00144/2013), which is funded by FCT (Portugal) with national (MEC) and European structural funds through the program FEDER under the partnership agreement PT2020, and Project STRIDE -- NORTE-01-0145-FEDER-000033, funded by ERDF -- NORTE 2020. The first author was also supported by the grant PD/BI/128072/2016, under the FCT PhD Programme UC\textbar UP MATH PhD Program.

\renewcommand{\bibname}{References}

\end{document}